\newcommand{\R}{\mathbb{R}}
\newcommand{\jap}[1]{\langle #1 \rangle}
\newcommand{\fia}{\mathbbm{1}_{|\Phi-\alpha|<M}}
\theoremstyle{plain}
\newtheorem{thm}{Theorem}
\newtheorem*{thm*}{Theorem}
\newtheorem{cor}[thm]{Corollary}
\newtheorem{lem}[thm]{Lemma}
\theoremstyle{definition}
\theoremstyle{remark}
\newtheorem{nb}[thm]{Remark}
\numberwithin{equation}{section}
\newtheoremstyle{mytheoremstyle} % name
{\topsep}                    % Space above
{\topsep}                    % Space below
{}                   % Body font
{}                           % Indent amount
{\scshape}                   % Theorem head font
{.}                          % Punctuation after theorem head
{.5em}                       % Space after theorem head
{}  % Theorem head spec (can be left empty, meaning ‘normal’)
\theoremstyle{mytheoremstyle} 
\theoremstyle{mytheoremstyle} 
\date{}
\author{Simão Correia}
\title{Improved global well-posedness for the quartic Korteweg-de Vries equation}
\subjclass[2020]{35A01, 35B45, 35Q53} %     
\keywords{Quartic Korteweg-de Vries, global existence, I-method.}
\thanks{S. C. was partially supported by Funda\c{c}\~ao para a Ci\^encia e Tecnologia, through CAMGSD, IST-ID
	(projects UIDB/04459/2020 and UIDP/04459/2020) and through the project NoDES (PTDC/MAT-PUR/1788/2020)}
\begin{document}
\maketitle
\begin{abstract}
	We prove that the quartic Korteweg-de Vries equation is globally well-posed for real-valued initial data in $H^s(\R)$, $s>-1/24$.
\end{abstract}
%%%%%%%%%%%%%%%%%%%%%%%%%%%%%%%%%%%%%%%%%%%%%%%%%%%%%%%%%%%%%%%%%%%%%%
\section{Introduction}
%%%%%%%%%%%%%%%%%%%%%%%%%%%%%%%%%%%%%%%%%%%%%%%%%%%%%%%%%%%%%%%%%%%%%%
In this note, we consider the quartic Korteweg-de Vries equation on the line
\begin{equation}\label{4kdv}\tag{4KdV}
	u_t + u_{xxx} + (u^4)_x=0
\end{equation}
with real-valued initial data $u_0\in H^s(\R\to \R)$. As it was proven by Grünrock in \cite{grunrock4kdv}, the initial value problem is locally well-posed for $s>-1/6$, which is the scaling-subcritical range. Later on, Tao \cite{tao4kdv} extended this result to the critical regularity $\dot{H}^{-1/6}(\R)$ and proved global existence for small data.

For large real-valued initial data in $H^s$, $s\ge 0$, the conservation of the $L^2$ norm,
$$
\|u(t)\|_{L^2}=\|u_0\|_{L^2},
$$ automatically gives a global well-posedness result. For lower regularities, Grünrock, Panthee and Silva \cite{GPS} applied the first iteration of the $I$-method, developed by Colliander, Keel, Stafillani, Takaoka and Tao \cite{ckstt_kdv, ckstt_dnls, ckstt_dnls2, ckstt_kdv2}, to extend this global existence result up to $s>-1/42$, which is the best result known-to-date.

An improvement on the global existence regularity through the \textit{I}-method requires a second modification of the energy functional, which is a very nontrivial problem. Apart from some cases where the algebraic structure of the dispersion/nonlinearity is
relatively simple, it is an open problem how to further iterate the \textit{I}-method for general dispersive equations.

The main purpose of this work is to present a general strategy of defining second iterations of the \textit{I}-method and to identify the precise mechanisms that allow for an improvement on the global well-posedness regularity threshold. In particular, we will prove
\begin{thm}\label{thm:main}
	The Cauchy problem for \eqref{4kdv} is globally well-posed in $ H^s(\R\to \R)$ for $s>-1/24$. Moreover, given $T>0$,
	$$
	\sup_{t\in[0,T]} \|u(t)\|_{H^s}\lesssim_\eta (1+T)^{\eta}\|u_0\|_{H^s},\quad \mbox{ for any }\ \eta>\frac{-2s}{1+24s}.
	$$
\end{thm}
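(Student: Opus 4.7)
The strategy is the $I$-method with a second modification of the energy. Introduce the smoothing operator $I=I_N$ associated to a Fourier multiplier $m(\xi)$ that equals $1$ on $|\xi|\leq N$ and decays like $(|\xi|/N)^{s}$ beyond, so that $\|Iu\|_{L^2}\lesssim N^{-s}\|u\|_{H^s}$. Using the scaling symmetry $u\mapsto\lambda^{2/3}u(\lambda^3 t,\lambda x)$ of \eqref{4kdv}, rescale the initial data so that $\|Iu_{0,\lambda}\|_{L^2}\leq\delta$, with $\delta$ small enough that Tao's local theory produces a rescaled solution on the unit time interval. A global bound for the original problem on $[0,T]$ will then follow once one controls the modified $L^2$ norm of the rescaled solution over a time interval of length $\lambda^{3}T$.

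Compute the time derivative of the first modified energy $E^1(u):=\|Iu\|_{L^2}^2$. Using the equation and integration by parts one obtains
\begin{equation}
\frac{d}{dt}E^1(u)=\int_{\xi_1+\cdots+\xi_5=0}M_5(\xi_1,\ldots,\xi_5)\prod_{j=1}^{5}\widehat{u}(\xi_j)\,d\xi,
\end{equation}
a $5$-linear form with symbol proportional to $\sum_j m^2(\xi_j)\xi_j$. Estimating this form directly in Bourgain spaces is the first iteration of the $I$-method and yields $s>-1/42$, as done in \cite{GPS}. To do better, introduce a quintic correction $\Lambda_5(u)$ whose time derivative cancels the $5$-linear form above: formally this forces a symbol $\sigma_5=iM_5/\Omega_5$, where $\Omega_5=\sum_j\xi_j^3$ is the resonance function on the hyperplane $\sum_j\xi_j=0$. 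The resulting modified energy $E^2:=E^1+\Lambda_5$ then has a time derivative equal to an $8$-linear form obtained by substituting the nonlinearity $(u^4)_x$ into one of the five slots of $\Lambda_5$.

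The core technical obstacle is the construction and control of $\sigma_5$: the denominator $\Omega_5$ vanishes on a nontrivial subset of the hyperplane. The plan is to decompose frequency space into: (i) a fully low-frequency region where $m\equiv1$ and $M_5$ vanishes identically; (ii) a truly non-resonant region where $|\Omega_5|$ is comparable to the product of the largest frequencies cubed, so that division by $\Omega_5$ is safe; and (iii) a near-resonant region where $|\Omega_5|$ is small. In region (iii) one must exploit structure of $M_5$ on the zero set of $\Omega_5$ — inherited from the algebraic constraints $\sum_j\xi_j=0=\sum_j\xi_j^3$ — to compensate the division, or else absorb the contribution into a direct multilinear estimate. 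One then verifies the comparability $|E^2-E^1|\leq\tfrac{1}{2}E^1$ for small data via an $L^2$-type bound for the symbol $\sigma_5$, so that control of $E^2$ transfers to control of $\|Iu\|_{L^2}$.

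Finally, estimate the $8$-linear form $\tfrac{d}{dt}E^2(u)$ on the unit time interval by $CN^{-\alpha}$ for some $\alpha>0$, using the bilinear and multilinear $X^{s,b}$ estimates of Grünrock and Tao together with refined Strichartz inputs. Iterating this almost-conservation law over the $\sim\lambda^3 T$ unit steps needed to cover the rescaled interval and undoing the scaling yields
\begin{equation}
\sup_{t\in[0,T]}\|u(t)\|_{H^s}\lesssim\lambda^{s+1/6}\bigl(1+\lambda^{3}T\,N^{-\alpha}\bigr)^{1/2}\|u_0\|_{H^s}.
\end{equation}
Choosing $\lambda$ and $N$ optimally in $T$ balances the two factors and gives the threshold $s>-1/24$ together with the polynomial-in-time bound with exponent $\eta>-2s/(1+24s)$. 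The hardest step will be the construction of $\Lambda_5$ and the sharp $8$-linear estimate; the precise value $-1/24$ is the algebraic consequence of the scaling exponent $s+1/6$ and the gain $\alpha$ one can extract from that estimate.
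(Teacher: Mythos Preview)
Your outline captures the general $I$-method strategy, but there is a genuine gap at the heart of the construction of $\Lambda_5$. You write that one should take $\sigma_5 = iM_5/\Omega_5$ and then, in the near-resonant region, ``exploit structure of $M_5$ on the zero set of $\Omega_5$ \dots\ to compensate the division.'' For \eqref{4kdv} this does not work: the paper points out explicitly that the algebraic miracle (numerator vanishing on $\{\Omega_5=0\}$) \emph{fails} here, so $M_5/\Omega_5$ is genuinely singular and cannot serve as a multiplier. Your fallback ``or else absorb the contribution into a direct multilinear estimate'' is where the actual content lies, and your proposal does not say how to do it.

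The paper's mechanism is to introduce a cutoff parameter $K$ and define the correction only on the nonresonant set $\{|\Omega_5|>K\}$, so that $\sigma_5 = (M_5/\Omega_5)\mathbbm{1}_{|\Omega_5|>K}$ is bounded. Consequently $\frac{d}{dt}E^2$ is \emph{not} purely an $8$-linear form as you assert: it is the sum of the residual quintic term on $\{|\Omega_5|<K\}$ and the octic term on $\{|\Omega_5|>K\}$. These contribute $N^{(-1/2)^+}K^{1^-}$ and $N^{(-1)^+}K^{0^-}$ respectively, and the choice $K=N^{-1/2}$ balances them to give the crucial gain $\alpha=1^-$; a generic ``some $\alpha>0$'' would not reach $s>-1/24$. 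Furthermore, the octic estimate cannot be obtained merely by iterating the quintic smoothing: there is a region (called $D_3$ in the paper) where $|\xi_1|\simeq|\xi_2|\simeq|\xi_3|\simeq|\xi_4|\gg|\xi_{5678}|$ and the resonance function is stationary in the high-frequency variables, forcing a direct $8$-linear frequency-restricted estimate. None of the cutoff parameter $K$, the surviving quintic term, or the $D_3$ analysis appears in your plan, and without them the argument does not close.
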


\section{Description of the method}\label{sec:descript}
We follow closely the \textit{I}-method as introduced in \cite{ckstt_kdv2}. Given $N\gg 1$, define the multiplier
$$
m(\xi)=\begin{cases}
	1,& |\xi|<N\\ \left(\frac{|\xi|}{N}\right)^s, & |\xi|>N
\end{cases}
$$
and $\widehat{Iu}(\xi)=m(\xi)\hat{u}(\xi)$ (where $\ \widehat{\cdot}\ $ denotes the Fourier transform). Observe that, for $N$ fixed,
$$
\|Iu\|_{L^2} \simeq \|u\|_{H^s},
$$
so that the $I$-operator improves the regularity up to the energy regularity $s=0$.

Given $n\ge 2$, a multiplier $M_n: \R^n\to \R$ and functions $f_1,\dots, f_n\in \mathcal{S}'(\R)$, set 
$$
\Lambda_n\left(M_n;f_1,\dots, f_n\right)=\int_{\Gamma} M_n(\xi_1,\dots,\xi_n)\prod_{j=1}^n\widehat{f_j}(\xi_j)d\Xi,\qquad \Gamma=\left\{ (\xi_1,\dots,\xi_n)\in \R^n : \sum_{j=1}^{n}\xi_j=0 \right\}.
$$
If $f_j\equiv f$, we write simply $\Lambda_n(M_n;f)$. We will use the notation $\xi_{ijk}=\xi_i+\xi_j+\xi_k$ (and analogously for any finite sum of frequencies). Moreover, we shall abbreviate $m(\xi_\ast)$ (resp. $\hat{u}(\xi_\ast)$) as $m_\ast$ (resp. $\hat{u}_\ast$). The next lemma follows from the direct integration of \eqref{4kdv} (see \cite{ckstt_kdv2,corchopanthee_mkdv}):

\begin{lem}\label{lem:derivLambda}
	If $u$ is a smooth solution to \eqref{4kdv} and $M_n$ is a symmetric multiplier, then
	$$
	\frac{d}{dt}\Lambda_n(M_n;u)=i\Lambda_n\left(M_n\Phi_n;u\right)-in\Lambda_{n+3}(M_n(\xi_1,\dots,\xi_{n-1},\xi_n+\dots+\xi_{n+3})(\xi_n+\dots+\xi_{n+3});u).
	$$
	where $\Phi_n=\xi_1^3+\dots + \xi_n^3$.
\end{lem}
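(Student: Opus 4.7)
The plan is a direct differentiation in Fourier variables. Writing $\Lambda_n(M_n;u(t))$ as an integral of $M_n\prod_j \hat{u}(\xi_j,t)$ over $\Gamma$, I would first justify differentiation under the integral using the smoothness of $u$, obtaining a sum $\sum_{j=1}^n\int_\Gamma M_n(\prod_{i\neq j}\hat{u}_i)\partial_t\hat{u}_j\,d\Xi$. Since $M_n$ is symmetric and $\Gamma$ is invariant under permutations of the frequencies, a relabelling of variables shows that each summand equals the $j=n$ term, so the sum collapses to $n\int_\Gamma M_n\prod_{i<n}\hat{u}_i\cdot\partial_t\hat{u}_n\,d\Xi$. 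This is the standard collapse-by-symmetry step that underlies every computation of this type in the $I$-method literature.

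Next I would substitute the Fourier transform of \eqref{4kdv}, which reads $\partial_t\hat{u}(\xi)=i\xi^3\hat{u}(\xi)+i\xi\,\widehat{u^4}(\xi)$ (up to the global sign convention in the paper), and split the result into a linear and a nonlinear contribution. The linear piece gives $ni\int_\Gamma M_n\xi_n^3\prod\hat{u}_j\,d\Xi$; symmetrising once more in $\xi_1,\dots,\xi_n$ converts $n\xi_n^3$ into $\Phi_n=\xi_1^3+\cdots+\xi_n^3$, producing $i\Lambda_n(M_n\Phi_n;u)$. For the nonlinear piece I would expand $\widehat{u^4}(\xi_n)$ as a fourfold convolution, $\widehat{u^4}(\xi_n)=\int_{\eta_1+\eta_2+\eta_3+\eta_4=\xi_n}\prod_{k=1}^4\hat{u}(\eta_k)$, and rename $\eta_k$ as $\xi_{n-1+k}$. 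Under this relabelling the original hyperplane constraint $\xi_1+\cdots+\xi_{n-1}+\xi_n^{\mathrm{old}}=0$ becomes the new constraint $\xi_1+\cdots+\xi_{n+3}=0$, while $\xi_n^{\mathrm{old}}$ is replaced throughout by $\xi_n+\cdots+\xi_{n+3}$. This matches the $\Lambda_{n+3}$ term in the statement, with the extra factor $(\xi_n+\cdots+\xi_{n+3})$ coming from the $i\xi_n$ in front of $\widehat{u^4}(\xi_n)$.

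The computation is essentially bookkeeping; the only delicate points are: (i) verifying that differentiation under the integral sign is legitimate for smooth solutions, so that no boundary or convergence issues arise; (ii) being consistent with Fourier-transform and hyperplane-measure conventions so that the overall signs and the numerical constant $n$ come out correctly; and (iii) making sure that the symmetrisation in the linear term is performed only after the factor $n$ has been extracted, so that the coefficient of each $\xi_j^3$ in $\Phi_n$ ends up equal to one. None of these steps requires novel analysis beyond what already appears in \cite{ckstt_kdv2,corchopanthee_mkdv}, and I do not see a substantive obstacle.
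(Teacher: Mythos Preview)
Your proposal is correct and is precisely the ``direct integration of \eqref{4kdv}'' that the paper invokes (with a citation to \cite{ckstt_kdv2,corchopanthee_mkdv}) in lieu of a written proof: differentiate under the integral, use the symmetry of $M_n$ to collapse the sum, substitute the Fourier form of the equation, and expand the convolution in $\widehat{u^4}$. The only caveat is the sign convention you already flagged; with the paper's convention the nonlinear term carries the factor $-in$, consistent with the $n=2$ computation immediately following the lemma.
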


According to the $I$-method, the first modified energy is simply $E_1=\|Iu\|_{L^2}^2=\Lambda_2(m_1m_2;u)$. By Lemma \ref{lem:derivLambda} and the convolution relation,
$$
\frac{dE_1}{dt}=-2i\Lambda_5\left(m_1m_{2345}\xi_{2345};u\right) = \frac{2i}{5}\Lambda_5\left(m_1^2\xi_1 + \dots + m_5^2\xi_5; u \right).
$$
and thus
\begin{equation}\label{eq:derivadaE1}
	E_1(u(t))-E_1(u(0))=\frac{2i}{5}\int_0^t \Lambda_5\left(m_1^2\xi_1 + \dots + m_5^2\xi_5; u \right) ds.
\end{equation}
In order to control the right-hand side, we introduce the $X^{s,b}$ norm
$$
\|u\|_{X^{s,b}}^2 = \int_\R \int_\R \jap{\xi}^{2s} \jap{\tau-\xi^3}^{2b}|\mathcal{F}_{t,x}u|^2(\tau,\xi) d\tau d\xi,
$$
where $\mathcal{F}_{t,x}$ denotes the Fourier transform in both variables.
\medskip

Applying Cauchy-Schwarz (see Lemma \ref{lem:cs} below), given\footnote{Given $a\in\R$, $a^+$ (resp. $a^-$) represents a number slightly larger (resp. smaller) than $a$.} $b=\frac{1}{2}^+$, $b'=(b-1)^+$,
$$
\left|\int_0^t \Lambda_5\left(m_1^2\xi_1 + \dots + m_5^2\xi_5; u \right) ds \right| \lesssim t^{0^+}\left\| \left(\frac{m_1^2\xi_1+\dots+m_4^2\xi_4-m_{1234}^2\xi_{1234}}{m_{1234}}\right)^\vee u_1u_2u_3u_4 \right\|_{X^{0,b'}}\|Iu\|_{X^{0,b}},
$$
In \cite{GPS}, the authors proved the estimate
\begin{equation}\label{eq:N12_0}
\left\| \left(\frac{m_1^2\xi_1+\dots+m_4^2\xi_4-m_{1234}^2\xi_{1234}}{m_{1234}}\right)^\vee u_1u_2u_3u_4 \right\|_{X^{0,b'}}\lesssim N^{(-1/2)^+}\prod_{j=1}^4\|Iu_j\|_{X^{0,b}}.
\end{equation}
Even though their proof was direct, one can actually see \eqref{eq:N12_0} as
%
% or, equivalently,
%\begin{equation}\label{eq:N12}
%	\left\| \left(\frac{\mathcal{M}_1}{\prod_{j=1}^4m_j}\right)^\vee v_1v_2v_3v_4 \right\|_{X^{0,b'}}\lesssim N^{-1/2^+}\prod_{j=1}^4\|v_j\|_{X^{0,b}}.
%\end{equation}
%Estimate \eqref{eq:N12} can be shown  to be 
a consequence  of the nonlinear smoothing effect proved in \cite[Proposition 6]{COS23},
\begin{equation}\label{eq:nonlinearsmooth0}
	\||\nabla|\jap{\nabla}^{(1/2)^-}u_1u_2u_3u_4\|_{X^{0,b'}}\lesssim \prod_{j=1}^4\|u_j\|_{X^{0,b}}.
\end{equation}
(see Corollary \ref{cor:gps}). From \eqref{eq:N12_0}, we conclude that
$$
|E_1(u(t))-E_1(u(0))| \lesssim t^{0^+}N^{(-1/2)^+}\|Iu\|_{X^{0,b}}^5.
$$
Applying the general scaling arguments of the $I$-method, this then yields global well-posedness for $s>-1/42$. More precisely, it is the rate of decay in $N$ that ultimately determines the regularity threshold. In order to lower the threshold to $s>-1/24$, we need to define a  modified energy $E_2$ (equivalent to $E_1$) for which
$$
|E_2(u(t))-E_2(u(0))| \lesssim t^{0^+}N^{-1^-}.
$$
Let us look for a modified energy of the form
$$
E_2=E_1 + \Lambda_5(M_5;u), \quad \mbox{for some symmetric multiplier }M_5.
$$
Then
$$
\frac{dE_2}{dt} = i\Lambda_5\left( \frac{2\left(m_1^2\xi_1+\dots +m_5^2\xi_5\right)}{5} + M_5\Phi_5;u \right) - 5i\Lambda_8\left(M_5(\xi_1,\dots,\xi_4,\xi_{5678})\xi_{5678} ;u\right)
$$
As one expects higher-order terms to contribute with stronger decays, the most standard choice is to choose $M_5$ so that the quintic multiplier is identically zero:
\begin{equation}\label{eq:tentativaM5}
	M_5=M'_5:=-\frac{2\left(m_1^2\xi_1+\dots +m_5^2\xi_5\right)}{5\Phi_5}.
\end{equation}
However, one must ensure that $M_5$ is not a singular multiplier. For this to happen, an algebraic "miracle" must occur, so that the zero set of the denominator is completely included in the zero set of the numerator. This miracle \textit{does occur} for the KdV, the mKdV and the derivative NLS, which lead to improved global existence results \cite{ckstt_dnls2,ckstt_kdv2,  corchopanthee_mkdv}. Unfortunately, such a property is far from being valid in general \cite{farah_5kdv,  GPS, kishimoto_zakharov, tzirakis_5nls}. In our context, one can actually check that $M_5$ as defined in \eqref{eq:tentativaM5} is singular.

In face of the singularity of $M_5$, one can try to be less ambitious: instead of a total cancellation of the quintic multiplier, one can aim for a \textit{partial} cancellation which still eliminates the worst contributions. Given a symmetric set $D$ on which $M_5'$ does not have singularities, one may set
\begin{equation}\label{eq:tentativaM52}
	M_5=M_5'\mathbbm{1}_D.
\end{equation}
In this way, $E_2$ is well-defined and
$$
\frac{dE_2}{dt} = i\Lambda_5\left( \frac{2\left(m_1^2\xi_1+\dots m_5^2\xi_5\right)}{5}\mathbbm{1}_D^c;u \right) - 5i\Lambda_8\left(M_5(\xi_1,\dots,\xi_4,\xi_{5678})\xi_{5678} ;u\right)
$$
The larger the set $D$, the better the cancellation and the subsequent decay in $N$. Currently, this is the most advanced form of the $I$-method and it has been sucessfully applied in \cite{ ckstt_2d_nls, kishimoto_zakharov}. However, the choice of $D$ is far from being trivial and
one often must rely on a precise knowledge of the resonant set $\{\Phi_5=0\}$ in order to derive appropriate estimates in $X^{s,b}$ spaces. 

In the recent work \cite{COS23}, the authors established the connection between multilinear estimates in Bourgain spaces and frequency-restricted estimates, which are estimates in spatial frequency over sublevel sets of the resonance function $\Phi_n$. Moreover, the derivation of such frequency-restricted estimates relies not on the precise description of the sublevel set, but rather on its \textit{geometry}. In particular, frequency-restricted estimates are perfectly suited to derive bounds on a second iteration of the energy, as the choice $D=\{|\Phi_5|>K\}$, for some $0<K<1$ to be chosen, induces the restriction to a sublevel of the resonance function.

We therefore consider the second modified energy as
$$
E_2=\Lambda_2(m_1m_2;u)-\Lambda_5\left(\frac{2\left(m_1^2\xi_1+\dots+m_5^2\xi_5\right)}{5\Phi_5}\mathbbm{1}_{|\Phi_5|>K};u\right).
$$
\begin{nb}
	We remark that $E_2$ is nothing but the original energy $E_1$ after an appropriate normal form reduction (see \cite{Bou_normal}). Indeed, let us go back to \eqref{eq:derivadaE1} and write the r.h.s. in terms of the profile $\tilde{u}=e^{-it\xi^3}\hat{u}$:
	$$
	E_1(u(t))-E_1(u(0))=\frac{2i}{5}\int_0^t\int_\Gamma e^{is\Phi_5} \left(m_1^2\xi_1 + \dots + m_5^2\xi_5\right) \tilde{u}_1\dots \tilde{u}_5 d\Xi ds.
	$$
	In order to exploit the oscillations in time, one can integrate by parts in time using the relation $$e^{is\Phi_5}=\frac{1}{i\Phi_5}\partial_s(e^{is\Phi_5}).$$ However, since the resonance function can become singular, one must first decompose the integral into the near-resonant region $|\Phi_5|<K$ and the nonresonant region $|\Phi_5|>K$ and perform the integration by parts only in the second. Doing so produces the boundary term
	$$
	\left[ \int_\Gamma e^{is\Phi_5} \frac{m_1^2\xi_1 + \dots + m_5^2\xi_5}{i\Phi_5}\mathbbm{1}_{|\Phi_5|>K} \tilde{u}_1\dots \tilde{u}_5 d\Xi  \right]_{s=0}^{s=t}
	$$
	which is nothing more than the quintic term introduced in the second modified energy.
\end{nb}

\bigskip
Having deduced the candidate for the second modified energy, we must check that $E_2$ is well-defined whenever $Iu\in L^2$:
\begin{lem}[$E_2$ is well-defined]\label{lem:defiE2}
	Given $u\in \mathcal{S}'(\R)$ such that $Iu\in L^2(\R)$,
	\begin{equation}
		\left|\Lambda_5\left(\frac{m_1^2\xi_1+\dots+m_5^2\xi_5}{\Phi_5}\mathbbm{1}_{|\Phi_5|>K};u\right)\right|\lesssim N^{(-1/2)^+}K^{0^-}\|Iu\|_{L^2}^5.
	\end{equation}
\end{lem}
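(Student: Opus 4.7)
The plan is to reduce the estimate to a frequency-restricted bound on $\Gamma$ via a standard Cauchy-Schwarz argument, then exploit the sublevel-set machinery developed in \cite{COS23}.

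\smallskip

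\textbf{Step 1 (Reduction to $Iu$).} Using $\hat{u}_j=\hat{Iu}_j/m_j$, the bound reduces to
$$\left|\Lambda_5\!\left(\tilde M;Iu\right)\right|\lesssim N^{(-1/2)^+}K^{0^-}\|Iu\|_{L^2}^5,\qquad \tilde M:=\frac{m_1^2\xi_1+\dots+m_5^2\xi_5}{\Phi_5\prod_{j=1}^5 m_j}\mathbbm{1}_{|\Phi_5|>K}.$$

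\textbf{Step 2 (Cauchy-Schwarz reduction to a frequency-restricted estimate).} Parametrize $\Gamma$ by $(\xi_1,\xi_2,\xi_3,\xi_4)$ with $\xi_5=-\xi_{1234}$ and apply Cauchy-Schwarz first in the outer variable $\xi_5$ and then on the hyperplane $\sigma(\eta):=\{\xi_1+\xi_2+\xi_3+\xi_4=\eta\}\subset\R^4$. This yields the general $5$-linear bound
$$\left|\Lambda_5(\tilde M;Iu)\right|\leq \|Iu\|_{L^2}^5\left(\sup_{\eta\in\R}\int_{\sigma(\eta)}|\tilde M|^2\,d\sigma\right)^{1/2},$$
so it suffices to prove
$$\sup_{\eta}\int_{\sigma(\eta)}\frac{\left|\sum_j m_j^2\xi_j\right|^2}{\Phi_5^2\prod_j m_j^2}\mathbbm{1}_{|\Phi_5|>K}\,d\sigma\lesssim N^{-1^+}K^{0^-}.$$

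\textbf{Step 3 (Pointwise analysis of the numerator).} Since $m$ is even and $\xi_5=-\xi_{1234}$ on $\Gamma$, we have $m_5=m_{1234}$, whence
$$\sum_{j=1}^{5}m_j^2\xi_j=\sum_{j=1}^{4}m_j^2\xi_j-m_{1234}^2\xi_{1234},$$
which is exactly the multilinear symbol appearing in \eqref{eq:N12_0}. In particular, by the mean-zero property on $\Gamma$, the numerator vanishes whenever $\max_j|\xi_j|<N$. I will record the dyadic pointwise bounds on $|\sum m_j^2\xi_j|/\prod m_j$ from \cite{GPS} (restricted to shells $|\xi_j|\sim N_j$), which are precisely the ingredient that produces the $N^{(-1/2)^+}$ cancellation.

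\textbf{Step 4 (Sublevel-set bounds and dyadic summation).} The remaining factor $\Phi_5^{-2}\mathbbm{1}_{|\Phi_5|>K}$ is handled by decomposing $|\Phi_5|\sim L$ dyadically for $L\geq K$ and using the geometric sublevel-set estimates for $\sigma(\eta)\cap\{|\Phi_5|\sim L,\,|\xi_j|\sim N_j\}$ derived from the frequency-restricted framework of \cite{COS23}. Combined with the pointwise bound from Step 3 and the $L^{-2}$ factor, each dyadic piece contributes a quantity that sums in $L\geq K$ with at most a logarithmic divergence, producing the $K^{0^-}$ factor; the $N_j$-sums converge and saturate at $N^{-1^+}$, matching the target.

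\smallskip

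\textbf{Main obstacle.} Unlike $\Phi_4$, which factors as $3(\xi_1+\xi_2)(\xi_1+\xi_3)(\xi_2+\xi_3)$ on its constraint surface, the quintic resonance function $\Phi_5$ admits no such clean factorization, so the geometry of $\{|\Phi_5|\sim L\}\cap\sigma(\eta)$ must be analyzed via implicit-function/coarea arguments as in \cite{COS23}, splitting into cases according to the relative sizes of the $N_j$. The heart of the argument is to calibrate the cancellation of $\sum m_j^2\xi_j$ against the sublevel-set volume growth so that the dyadic sum in $L$ converges uniformly in $K$ with the sharp $K^{0^-}$ loss, without leaking any inverse power of $K$ during the case analysis.
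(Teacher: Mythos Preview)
Your Step~1 is correct and coincides with the paper's first reduction. The fatal gap is Step~2: the crude Cauchy--Schwarz you propose (fix $\xi_5=-\eta$, then bound by $\sup_\eta\int_{\sigma(\eta)}|\tilde M|^2$) is too lossy, and the resulting frequency integral does \emph{not} obey the bound you need. The point is precisely that $M_5'=\frac{\sum_j m_j^2\xi_j}{\Phi_5}$ is singular on $\Gamma$ (this is stated explicitly in the paper), so $|\tilde M|^2$ is not integrable with the right constants near the singular set. Concretely, one can find a resonant configuration with all $|\xi_j|\sim N$, $\sum\xi_j=0$, $\Phi_5=0$, but $\sum m_j^2\xi_j\sim N$ (take for instance $\zeta=(1,1,-t,-t,2t-2)$ with $t=(3+\sqrt 5)/2$ and $\xi_j=aN\zeta_j$, $a>1$; then $\sum\zeta_j=\sum\zeta_j^3=0$ while $\sum|\zeta_j|^{2s}\zeta_j\neq 0$ for $-1/6<s<0$). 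In a neighbourhood of such a point inside $\sigma(\eta)$ one has $|\tilde M|\sim N/|\Phi_5|$ with $|\nabla\Phi_5|\sim N^2$, and a direct computation gives a local contribution $\gtrsim N^2/K$ to $\int_{\sigma(\eta)}|\tilde M|^2$, hence $\big(\sup_\eta\int|\tilde M|^2\big)^{1/2}\gtrsim N K^{-1/2}$, which is incompatible with the target $N^{(-1/2)^+}K^{0^-}$. No amount of case analysis in Steps~3--4 can repair this, because the obstruction is already in the reduction of Step~2.

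The paper's proof never squares the multiplier. After the same substitution $v_j=Iu_j$, it decomposes dyadically in $|\Phi_5|\sim K'$ for $K'>K$, applies the pointwise bound of Lemma~\ref{lem:pointwise}, and then invokes the \emph{two-sided} frequency-restricted estimate \eqref{eq:fre} via a Schur-type Cauchy--Schwarz: writing $M=(M_1M_2)^{1/2}$ with a partition $A\sqcup A^c$ of the frequency indices and bounding
\[
\int_{\Gamma_5} M\,\mathbbm{1}_{|\Phi_5|\sim K'}\,|v_1\cdots v_5|\ \le\ \Big(\sup_{\xi_{A^c}}\!\int M_1\,\mathbbm{1}_{|\Phi_5|\sim K'}\,d\xi_A\Big)^{1/2}\Big(\sup_{\xi_A}\!\int M_2\,\mathbbm{1}_{|\Phi_5|\sim K'}\,d\xi_{A^c}\Big)^{1/2}\prod_j\|v_j\|_{L^2}.
\]
Both suprema are $\lesssim (K')^{1^-}$ by \eqref{eq:fre} (with $\alpha=0$), so the $K'$-piece contributes $N^{(-1/2)^+}(K')^{1^-}$; summing $\sum_{K'>K}(K')^{1^-}/K'\lesssim K^{0^-}$ finishes the proof in two lines. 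The flexibility of choosing $A$, $M_1$, $M_2$ (possibly depending on the frequency region) is exactly what circumvents the singularity of $M_5'$ and is lost in your one-sided reduction.
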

The time derivative for the second modified energy is given by
$$
\frac{dE_2}{dt} = 2i\Lambda_5\left( \frac{m_1^2\xi_1+\dots m_5^2\xi_5}{5}\mathbbm{1}_{|\Phi_5|<K};u \right) - 5i\Lambda_8\left(\frac{m_1^2\xi_1+\dots+m_4^2\xi_4-m_{1234}^2\xi_{1234}}{\Phi_5}\xi_{5678}\mathbbm{1}_{|\Phi_5|>K};u \right).
$$
The first term is also well-behaved:
\begin{lem}[Control of the $\Lambda_5$-term]\label{lem:L5}
	Given $u\in\mathcal{S}'(\R^2)$ with $Iu\in X^{0,(1/2)^+}$,
	\begin{equation}
		\left|	\int_0^t \Lambda_5\left( \frac{m_1^2\xi_1+\dots +m_5^2\xi_5}{5}\mathbbm{1}_{|\Phi_5|<K};u \right) ds \right| \lesssim t^{0^+}N^{(-1/2)^+}K^{1^-}\|Iu\|_{X^{0,1/2^+}}^5.
	\end{equation}
\end{lem}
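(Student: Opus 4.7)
The plan is to reduce the estimate to a quadrilinear nonlinear smoothing bound in $X^{s,b}$ and then extract the $K^{1^-}$ gain from the resonance cut-off.

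First, since $m$ is even and $\xi_5=-\xi_{1234}$ on $\Gamma$, the $\Lambda_5$-form may be rewritten as an $L^2_x$ pairing of $Iu$ with a quadrilinear operator $\mathcal N(Iu,Iu,Iu,Iu)$ of symbol
$$
\frac{m_1^2\xi_1+\dots+m_5^2\xi_5}{5\,m_1m_2m_3m_4m_5}\mathbbm{1}_{|\Phi_5|<K}
$$
(evaluated at $\xi_5=-\xi_{1234}$). Applying Lemma \ref{lem:cs} with $b=(1/2)^+$, $b'=(-1/2)^+$ gives
$$
\left|\int_0^t\Lambda_5(M;u)\,ds\right|\lesssim\|\mathcal N(Iu_1,\dots,Iu_4)\|_{X^{0,(-1/2)^+}}\|Iu_5\|_{X^{0,(1/2)^+}},
$$
so the task reduces to proving
$$
\|\mathcal N(Iu_1,\dots,Iu_4)\|_{X^{0,(-1/2)^+}}\lesssim N^{(-1/2)^+}K^{1^-}\prod_{j=1}^4\|Iu_j\|_{X^{0,(1/2)^+}}.
$$

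Second, dropping the cut-off reduces exactly to estimate \eqref{eq:N12_0} (Corollary \ref{cor:gps}), which supplies the $N^{(-1/2)^+}$ factor; the remaining task is to obtain the $K^{1^-}$ gain from $\mathbbm{1}_{|\Phi_5|<K}$. Parameterizing the quadrilinear form by input modulations $\sigma_j=\tau_j-\xi_j^3$ and output modulation $\sigma=\tau-\xi^3$ (with output frequency $\xi=\xi_{1234}$), and using $\tau=\sum_{j=1}^4\tau_j$ on the support, one finds $\Phi_5=\sigma-\sum_{j=1}^4\sigma_j$. Hence the cut-off becomes $\mathbbm{1}_{|\sigma-\sum\sigma_j|<K}$, restricting the output modulation to a strip of length $\sim K$ around $\sum\sigma_j$. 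This restriction is precisely what produces the $K^{1^-}$ gain after a dyadic Littlewood-Paley decomposition in modulations.

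The main obstacle is the dyadic bookkeeping: one must couple the $N^{(-1/2)^+}$ gain from \eqref{eq:nonlinearsmooth0}, applied on each modulation-adapted dyadic block, with the $K^{1^-}$ gain from the output-modulation restriction, ensuring that the two factors combine multiplicatively with only the unavoidable $0^+$ endpoint losses. Regimes where all input modulations are $\lesssim K$ and regimes where at least one modulation is much larger have to be handled separately, in the spirit of the frequency-restricted estimates of \cite{COS23}.
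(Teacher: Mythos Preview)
Your reduction via Lemma \ref{lem:cs} to a quadrilinear $X^{0,b'}$ estimate is exactly what the paper does, and you correctly identify that the $N^{(-1/2)^+}$ factor comes from the pointwise multiplier bound of Lemma \ref{lem:pointwise} together with the nonlinear smoothing estimate underlying Corollary \ref{cor:gps}.

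The gap is in your mechanism for the $K^{1^-}$ gain. You claim that the identity $\Phi_5=\sigma-\sum_j\sigma_j$ restricts the output modulation $\sigma$ to a strip of length $\sim K$ and that this restriction ``is precisely what produces the $K^{1^-}$ gain.'' This is not correct: restricting $\sigma$ to a strip of length $K$ inside the $L^2_{\tau,\xi}$-based norm $X^{0,b'}$ yields no quantitative improvement by itself (the weight $\jap{\sigma}^{b'}$ is $\sim 1$ on $|\sigma|\lesssim 1$, and for $K<1$ the strip sits there anyway). A dyadic decomposition in modulations alone cannot manufacture $K^{1^-}$; the relation $\Phi_5=\sigma-\sum_j\sigma_j$ only ties the modulation variables to the spatial-frequency function $\Phi_5$, it does not by itself shrink any integral.

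The actual source of $K^{1^-}$ is a \emph{spatial-frequency} measure bound: the frequency-restricted estimate \eqref{eq:fre} says that, for the multiplier $M_1$ (and $M_2$) arising from $|\xi|\jap{\xi}^{(1/2)^-}$, one has
\[
\sup_{\xi_{j\notin A},\alpha}\int_{\Gamma_\xi} M_1\,\mathbbm{1}_{|\Phi-\alpha|<K}\,d\xi_{j\in A}\lesssim K^{1^-}.
\]
This is a bound on the measure of a sublevel set of $\Phi_5$ in the spatial frequencies, and it is what drives the gain. The paper packages this cleanly as estimate \eqref{eq:multibourgKpeq} of Lemma \ref{lem:interpol}: once \eqref{eq:fre} is known, the multilinear $X^{0,b'}$ bound with cutoff $\mathbbm{1}_{|\Phi|<K}$ follows with the factor $K^{1^-}$ automatically, by dropping the $\jap{\Phi-\alpha}$ weight and using the sublevel-set bound directly (see the two displayed lines in the proof of \eqref{eq:multibourgKpeq}). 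Combining this with the pointwise bound \eqref{eq:pointwise} and Lemma \ref{lem:monot} finishes the proof in one line. Your sketch should invoke this machinery rather than a modulation-strip argument.
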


As observed in \cite{COS23}, the restriction to the sublevel sets of the resonance function is a natural constraint for multilinear estimates in Bourgain spaces and the two lemmas above will be seen as direct consequences of the nonlinear smoothing effect \eqref{eq:nonlinearsmooth0}. 

\bigskip

We now focus on the $\Lambda_8$-term and give an heuristic argument for a stronger decay in $N$. First, let us recall the origin of this term. As one differentiates the quintic term of the energy, the time derivative falls onto the various instances of $u$. Using \eqref{4kdv}, the $\Lambda_8$-term appears as one replaces the $u_t$ factor with the nonlinearity $\partial_x(u^4)$. Let us represent the process with tree graphs\footnote{This representation is inspired in the normal form reduction procedure, see \cite{guo, koy}.}: we identify the nonlinear term $\partial_x(u^4)$ with
\begin{equation}\label{eq:tree1}
	\begin{tikzpicture}
		
		\node {$\xi_1$} [sibling distance = 1.5cm]
		child {node {$\xi_2$}} 
		child {node {$\xi_3$}} 
		child {node {$\xi_4$}} 
		child {node {$\xi_5$}
		};
	\end{tikzpicture}
\end{equation}
and the $\Lambda_8$-term with
\begin{equation}\label{eq:tree2}
\begin{tikzpicture}
	\node {$\xi_1$} [sibling distance = 1.5cm]
	child {node {$\xi_2$}} 
	child {node {$\xi_3$}} 
	child {node {$\xi_4$}} 
	child {node {$\xi_{5678}$}
		child {node {$\xi_5$}}
		child {node {$\xi_6$}}
		child {node {$\xi_7$}}
		child {node {$\xi_8$}}
	};
\end{tikzpicture}
\end{equation}
By the nonlinear smoothing effect \eqref{eq:nonlinearsmooth0}, the regularity of the parent node in \eqref{eq:tree1} can be upgraded by (almost) $1/2$-derivatives.

Let us now focus on the tree \eqref{eq:tree2}. First, if all frequencies are $\ll N$, then the multiplier vanishes (this comes from the conservation law at the $L^2$ level). Therefore we can assume that two frequencies are $\gtrsim N$. In the worst-case scenario, one of them is $\xi_1$.

If $|\xi_{5678}|\gtrsim N$, then we can use the nonlinear smoothing effect twice, once for each subtree. Since the parent of each subtree is $\gtrsim N$, one can gain $N^{(-1/2)^+}$ from each estimate and reach $N^{(-1)^+}$. The case where one of the frequencies $\xi_5,\dots,\xi_8$ is $\gtrsim N$ can be handled in the same way. 

We now suppose that $|\xi_5|,\dots,|\xi_8|\ll N$. Again by the smoothing effect, we know that the regularity of the $\xi_{5678}$-node is $(1/2)^-$. One can then ask if this regularity can be transfered to $\xi_1$, giving a gain of a full derivative. This requires a refined nonlinear smoothing estimate (Lemma \ref{lem:refined}), and it does hold in the complement of the region
$$
\{|\xi_1|\simeq \dots \simeq |\xi_4| \gg |\xi_{5678}|\}.
$$
In this region, the resonance function $\Phi_5$ is stationary in all frequencies but $\xi_{5678}$, which prevents us from using the extra regularity in this variable. The key point is to use the full tree \eqref{eq:tree2}, as one "gains room" (in terms of number of spatial frequencies) in order to exploit both the nonstationarity of $\Phi_5$ in $\xi_{5678}$ and the smoothing effect at the lowest frequencies (see Lemma \ref{lem:D3}). 

\bigskip

In conclusion, the nonlinear smoothing effect gives a gain of $N^{(-1)^+}$ except in a very particular region, where a full 8-linear smoothing estimate must be derived.
\begin{lem}[Control of the $\Lambda_8$-term]\label{lem:L8}
	Given $u\in\mathcal{S}'(\R^2)$ with $Iu\in X^{0,(1/2)^+}$,
	\begin{equation}
		\left| \int_0^t \Lambda_8\left(\frac{m_1^2\xi_1+\dots+m_4^2\xi_4-m_{1234}^2\xi_{1234}}{\Phi_5}\xi_{5678}\mathbbm{1}_{|\Phi_5|>K};u \right) ds \right|\lesssim t^{0^+}N^{(-1)^+}K^{0^-}\|Iu\|_{X^{0,(1/2)^+}}^8.
	\end{equation}
\end{lem}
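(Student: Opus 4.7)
The plan is to apply Lemma \ref{lem:cs} to reduce the time integral to multilinear spatial frequency estimates, and then decompose the frequency domain following the heuristic discussion above, handling each region with the appropriate smoothing tool. The restriction $\mathbbm{1}_{|\Phi_5|>K}$ is absorbed into a frequency-restricted estimate in the sense of \cite{COS23}, supplying the $K^{0^-}$ factor; the real work is in producing the full $N^{(-1)^+}$ decay, i.e.\ one extra factor of $N^{(-1/2)^+}$ beyond what the first-level analysis \eqref{eq:N12_0} already delivers.

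I would begin with a Littlewood--Paley decomposition $|\xi_j|\sim N_j$, $|\xi_{5678}|\sim N_*$. The numerator $m_1^2\xi_1+\dots+m_4^2\xi_4-m_{1234}^2\xi_{1234}$ vanishes whenever $N_1,\dots,N_4\ll N$ (the $L^2$-conservation cancellation encoded in \eqref{eq:N12_0}); combining with $\sum_{j=1}^{8}\xi_j=0$, one may assume by symmetry that $N_1\gtrsim N$ and that at least one further frequency among $N_2,\dots,N_8,N_*$ is $\gtrsim N$. The first-level multiplier
$$
\frac{m_1^2\xi_1+\dots+m_4^2\xi_4-m_{1234}^2\xi_{1234}}{m_{1234}}
$$
already provides, via Corollary \ref{cor:gps} and \eqref{eq:nonlinearsmooth0}, a gain of $N^{(-1/2)^+}\jap{\nabla}^{(1/2)^-}$ at the $\xi_{1234}$-node; it remains to extract a further $N^{(-1/2)^+}$ from the lower subtree at $\xi_{5678}$.

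I would then split into three cases. \textbf{Case A} ($N_*\gtrsim N$ or $\max(N_5,\dots,N_8)\gtrsim N$): apply \eqref{eq:nonlinearsmooth0} to the lower quartic product $u_5u_6u_7u_8$, whose parent frequency is $\gtrsim N$, yielding the second $N^{(-1/2)^+}$ factor. \textbf{Case B} ($N_*\ll N$ and all $N_5,\dots,N_8\ll N$, but \emph{outside} the stationary region $N_1\simeq\dots\simeq N_4\gg N_*$): use the refined nonlinear smoothing (Lemma \ref{lem:refined}) to transfer the $\jap{\nabla}^{(1/2)^-}$ derivative from the $\xi_{5678}$-node onto $\xi_1$, again closing the estimate. \textbf{Case C} (the stationary region $N_1\simeq\dots\simeq N_4\gg N_*$): straight subtree iteration breaks down because $\Phi_5$ is stationary in $\xi_1,\dots,\xi_4$, so the refined smoothing cannot transfer extra regularity onto those variables; I would rely on Lemma \ref{lem:D3}, a genuine 8-linear smoothing estimate combining the non-stationarity of $\Phi_5$ in $\xi_{5678}$ with the low-frequency structure of $\xi_5,\dots,\xi_8$.

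The main obstacle is \textbf{Case C}. In Cases A and B the estimate follows essentially by iterating \eqref{eq:nonlinearsmooth0}, possibly in its refined form (Lemma \ref{lem:refined}), combined with indicator bookkeeping for the $K^{0^-}$ factor. Case C is where the geometric structure of the sublevel set $\{|\Phi_5|<K\}^c$ enters decisively, and the successful treatment via Lemma \ref{lem:D3} is precisely what allows the improvement of the regularity threshold from $s>-1/42$ down to $s>-1/24$.
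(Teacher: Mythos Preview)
Your proposal is correct and matches the paper's approach essentially line for line: the decomposition into Cases A, B, C coincides with the paper's regions $D_1$, $D_2$, $D_3$, and the tools you invoke for each (double application of the basic smoothing, the refined smoothing of Lemma~\ref{lem:refined}, and the full 8-linear estimate of Lemma~\ref{lem:D3}) are exactly those used in the paper. The only cosmetic difference is that the paper's formal definition of the stationary region involves $|\xi_1|\simeq|\xi_2|\simeq|\xi_3|\gg|\xi_{1234}|$ rather than including $|\xi_4|$, but this does not affect the argument.
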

Now we optimize the estimates in $K$ by choosing $K=N^{-1/2}$. For this specific choice, we conclude that
\begin{equation}\label{eq:almosconserved}
	|E_2(u(t))-E_2(u(0))|\lesssim t^{0^+}N^{(-1)^+}\left(\|Iu\|_{X^{0,(1/2)^+}}^5 + \|Iu\|_{X^{0,(1/2)^+}}^8\right)
\end{equation}
From here, the proof of Theorem \ref{thm:main} now follows from standard arguments (see, for example, \cite[Proof of Theorem 1]{GPS}). We include the proof for the sake of completeness.

\begin{nb}
	As we will see, the necessary nonlinear smoothing estimates follow from specific frequency-restricted estimates. While it could be possible to prove the first without resorting to the latter, we claim tha frequency-restricted estimates appear naturally, since  the multipliers in the second modified energy already involve the restriction to certain sublevel sets of the resonance function.
\end{nb}

The rest of this paper is organized as follows. In Section \ref{sec:multi1}, we reduce Lemmas \ref{lem:defiE2} and \ref{lem:L5} to the nonlinear smoothing effect. In Section \ref{sec:multi2}, we prove Lemma \ref{lem:L8}, following the strategy described above. In Section \ref{sec:thm}, we prove the main result.

\bigskip
\textbf{Acknowledgements.}
I would like to thank Jorge Silva for the many fruitful and enthusiastic discussions on the subject.

\section{Multilinear estimates}

\subsection{Preliminaries and proof of Lemmas \ref{lem:defiE2} and \ref{lem:L5}}\label{sec:multi1}
We begin with a standard monotonicity result, which follows directly from a duality argument.

\begin{lem}\label{lem:monot}
	Suppose that, for a certain positive multiplier ${M}={M}(\xi_1,\dots,\xi_k)$,
	$$
	\left\| {M}^\vee u_1\dots u_k \right\|_{X^{s,b}}\le C\prod_{j=1}^k \|u_j\|_{X^{s_j,b_j}}
	$$
	and that ${N}$ is another multiplier with $|{N}|\le {M}$. Then
	$$
	\left\| {N}^\vee u_1\dots u_k \right\|_{X^{s,b}}\le C\prod_{j=1}^k \|u_j\|_{X^{s_j,b_j}}
	$$
\end{lem}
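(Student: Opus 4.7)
My plan is to reduce the estimate for $N$ to the hypothesized estimate for $M$ via a pointwise comparison on the Fourier side, sidestepping any explicit dualization. The first step is to recall that the $X^{s,b}$-norm of a distribution depends only on the pointwise modulus of its space-time Fourier transform. Accordingly, for each $j$, I would introduce the auxiliary function $|u_j|_+$ defined by $\widehat{|u_j|_+}(\tau,\xi) := |\widehat{u_j}(\tau,\xi)|$, so that $\||u_j|_+\|_{X^{s_j,b_j}} = \|u_j\|_{X^{s_j,b_j}}$. This effectively reduces the problem to the case where each $\widehat{u_j}$ is nonnegative.

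The key step is then the pointwise bound
\begin{equation*}
\bigl|\widehat{N^\vee u_1\cdots u_k}(\tau,\xi)\bigr| \;\le\; \widehat{M^\vee |u_1|_+\cdots |u_k|_+}(\tau,\xi) \qquad \text{for a.e.\ }(\tau,\xi).
\end{equation*}
This is immediate: expand the left-hand side as an integral over the constraint set $\{\sum_j\xi_j=\xi,\ \sum_j\tau_j=\tau\}$, apply the triangle inequality, and use $|N|\le M$ together with $M\ge 0$ to replace the integrand by $M\prod_j|\widehat{u_j}|$, which is precisely the integrand for $\widehat{M^\vee|u_1|_+\cdots|u_k|_+}$. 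Positivity of $M$ is essential here to keep the right-hand side well-defined without having to take further absolute values; this is the only point in the argument that uses the hypothesis that $M$ is positive.

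Finally, by Plancherel applied in the definition of the $X^{s,b}$-norm, the pointwise bound upgrades to
\begin{equation*}
\|N^\vee u_1\cdots u_k\|_{X^{s,b}} \;\le\; \|M^\vee|u_1|_+\cdots|u_k|_+\|_{X^{s,b}} \;\le\; C\prod_{j=1}^k \||u_j|_+\|_{X^{s_j,b_j}} \;=\; C\prod_{j=1}^k\|u_j\|_{X^{s_j,b_j}},
\end{equation*}
where the middle inequality is the assumed estimate applied to the functions $|u_j|_+$. Since the entire argument is formal on the Fourier side, I do not anticipate any substantial technical obstacle; what the author calls a "duality argument" is really just the Parseval-type representation of the $X^{s,b}$-norm through $|\widehat{\,\cdot\,}|$, which allows absolute values to be absorbed freely into the inputs.
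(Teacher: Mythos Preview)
Your argument is correct and is essentially the duality argument the paper alludes to, just phrased through Plancherel rather than through pairing with a dual test function: both routes reduce to the same pointwise inequality $|N|\prod_j|\widehat{u_j}|\le M\prod_j|\widehat{u_j}|$ on the Fourier side, together with the fact that the $X^{s,b}$-norms on input and output depend only on $|\widehat{\,\cdot\,}|$. Your final remark identifying the two viewpoints is exactly right.
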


From now on, $b=1/2^+$ and $b'=(b-1)^+$.
\begin{lem}\label{lem:cs}
	Given $u_1,u_2\in\mathcal{S}(\R^2)$ and $t\in \R$,
	$$
	\left|\int_0^t \int u_1(s,x)\overline{u_2}(s,x) dx ds\right|\lesssim t^{0^+}\|u_1\|_{X^{0,b'}}\|u_2\|_{X^{0,b}}.
	$$
\end{lem}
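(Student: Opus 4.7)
The plan is to place the sharp time cutoff on $u_1$, apply Bourgain-space duality, and reduce the estimate to the classical $L^2$-boundedness of multiplication by characteristic functions of intervals on Sobolev spaces $H^c(\R)$ with $|c|<1/2$. Concretely, writing
$$\int_0^t\int u_1\bar u_2\, dx\, ds = \int_\R\int_\R(\mathbbm{1}_{[0,t]} u_1)\bar u_2\, dx\, ds,$$
Plancherel in $(s,x)$ and Cauchy-Schwarz in the Fourier variables (using the duality between $X^{0,b'}$ and $X^{0,-b'}$) yield
$$\left|\int_0^t\int u_1\bar u_2\, dx\, ds\right| \le \|\mathbbm{1}_{[0,t]}u_1\|_{X^{0,b'}}\,\|u_2\|_{X^{0,-b'}}.$$
The second factor is trivially bounded by $\|u_2\|_{X^{0,b}}$, since $-b'=(1/2)^-<(1/2)^+=b$ makes the weight $\langle\tau-\xi^3\rangle^{-b'}$ pointwise smaller than $\langle\tau-\xi^3\rangle^{b}$.

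The task thus reduces to the uniform-in-$t$ sharp time-localization $\|\mathbbm{1}_{[0,t]}u_1\|_{X^{0,b'}}\lesssim\|u_1\|_{X^{0,b'}}$, which I would handle via the profile transform. Setting $v(s,x)=(e^{s\partial_x^3}u_1)(s,x)$, a direct Fourier computation shows $\|u_1\|_{X^{0,c}}=\|v\|_{L^2_x H^c_s}$ for every $c\in\R$; since multiplication by $\mathbbm{1}_{[0,t]}(s)$ commutes with the spatial propagator, the profile of $\mathbbm{1}_{[0,t]}u_1$ is simply $\mathbbm{1}_{[0,t]}v$. Hence, after integrating in $x$, it is enough to prove the pointwise-in-$x$ one-dimensional bound
$$\|\mathbbm{1}_{[0,t]}w\|_{H^{b'}(\R)}\lesssim\|w\|_{H^{b'}(\R)},\qquad w\in\mathcal{S}(\R_s),$$
uniformly in $t\in\R$.

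The hard part is this $1$D sharp-cutoff estimate. By duality it suffices to prove it with $-b'\in(0,1/2)$ in place of $b'$, which I would do via the Gagliardo seminorm characterization of $H^{-b'}$. Splitting $\R^2$ according to $[0,t]$ and its complement, the interior region contributes at most $[w]^2_{H^{-b'}}$, the doubly-exterior region vanishes, and the two mixed regions yield singular boundary integrals
$$\int_0^t|w(x)|^2\left(x^{-2(-b')}+(t-x)^{-2(-b')}\right)dx,$$
which are bounded uniformly in $t$ by the one-sided fractional Hardy inequality $\int_0^\infty|w|^2 x^{-2(-b')}dx\lesssim\|w\|^2_{H^{-b'}}$---valid precisely because $0<-b'<1/2$---applied at each endpoint after translation. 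This endpoint structure is exactly what forces the condition $|b'|<1/2$ throughout the lemma.
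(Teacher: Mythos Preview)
Your proof is correct and takes a genuinely different route from the paper's. The paper writes the time integral explicitly on the Fourier side, producing the kernel
\[
K(\tau_1,\tau_2)=\frac{e^{it(\tau_1+\tau_2)}-1}{i(\tau_1+\tau_2)}\,\frac{1}{\jap{\tau_1}^{b'}\jap{\tau_2}^{b}},
\]
and then verifies the Schur conditions $\sup_{\tau_1}\int|K|\,d\tau_2<\infty$ and $\sup_{\tau_2}\int|K|\,d\tau_1<\infty$ by hand (splitting into cases according to the relative sizes of $\tau_1$ and $\tau_1+\tau_2$), after which Cauchy--Schwarz in $\xi$ finishes. Your argument instead absorbs the cutoff into $u_1$, uses $X^{0,b'}$--$X^{0,-b'}$ duality, and reduces everything to the fact that $\mathbbm{1}_{[0,t]}$ is a bounded pointwise multiplier on $H^c(\R)$ uniformly in $t$ for $|c|<1/2$, which you recover from the Gagliardo seminorm and the fractional Hardy inequality. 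Your approach is more conceptual---it isolates the Sobolev-theoretic mechanism (sharp time truncation is harmless precisely in the range $|b'|<1/2$) and plugs into a standard multiplier lemma---whereas the paper's kernel computation is more self-contained, needing nothing beyond Schur's test, and makes the $t$-uniformity transparent via the elementary bound $|(e^{it\tau}-1)/\tau|\lesssim\jap{\tau}^{-1}$.
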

\begin{proof}
	We have
\begin{align*}
	&\ \int_0^t \int u_1(s,x)\overline{u_2}(s,x) dx ds\\ =& \	\int_0^t \int (e^{-t\partial_x^3}u_1)(s,x)\overline{e^{-t\partial_x^3}u_2}(s,x) dx ds \\=&\ \int\int_{\tau=\tau_1+\tau_2} \frac{e^{it\tau}-1}{i\tau}\left(\mathcal{F}_{t,x}e^{-t\partial_x^3}u_1\right)(\tau_1,\xi)\left(\overline{\mathcal{F}_{t,x}e^{-t\partial_x^3}u_2}\right)(\tau_2,\xi) d\tau_1d\tau_2d\xi \\=&\ \int\int_{\tau=\tau_1+\tau_2} K(\tau_1,\tau_2)w_1(\tau_1,\xi)w_2(\tau_2,\xi) d\tau_1d\tau_2 d\xi,
\end{align*}
where
$$K= \frac{e^{it\tau}-1}{i\tau}\frac{1}{\jap{\tau_1}^{b'}\jap{\tau_2}^b}$$
$$ w_1(\tau,\xi)=\jap{\tau}^{b'}\left(\mathcal{F}_{t,x}e^{-t\partial_x^3}u_1\right)(\tau,\xi),\quad w_2(\tau,\xi)=\jap{\tau}^b\left(\overline{\mathcal{F}_{t,x}e^{-t\partial_x^3}u_2}\right)(\tau,\xi).$$
Let us check that $K$ is a Hilbert-Schmidt kernel in the $\tau$ variables. First, since $b+b'>0$ and
$$
\left|\frac{e^{it\tau}-1}{i\tau}\right|\lesssim \frac{t^{0^+}}{|\tau|^{1^-}},
$$
we have
$$
\sup_{\tau_1}\int |K(\tau_1,\tau_2)|d\tau_2 \lesssim  \sup_{\tau_1}\int \frac{t^{0^+}}{|\tau_1+\tau_2|^{1^-}\jap{\tau_1}^{b'}\jap{\tau_2}^b}d\tau_2 \lesssim t^{0^+}
$$
On the other hand, if $|\tau_1|\lesssim |\tau_1+\tau_2|$,
$$
\sup_{\tau_2}\int |K(\tau_1,\tau_2)|d\tau_1 \lesssim  \sup_{\tau_2}\int \frac{t^{0^+}}{|\tau_1+\tau_2|^{1^-}\jap{\tau_1}^{b'}\jap{\tau_2}^b}d\tau_1 \lesssim \sup_{\tau_2}\int \frac{t^{0^+}}{|\tau_1|^{1^-}\jap{\tau_1}^{b'}\jap{\tau_2}^b}d\tau_1 \lesssim t^{0^+},
$$
and, if  $|\tau_1|\gg |\tau_1+\tau_2|$,
$$
\sup_{\tau_2}\int |K(\tau_1,\tau_2)|d\tau_1 \lesssim  \sup_{\tau_2}\int \frac{t^{0^+}}{\jap{\tau_1+\tau_2}\jap{\tau_1}^{b'}\jap{\tau_2}^b}d\tau_1 \lesssim \sup_{\tau_2}\int \frac{t^{0^+}}{|\tau_1+\tau_2|^{1^-}\jap{\tau_1}^{b+b'}}d\tau_1 \lesssim t^{0^+},
$$
Therefore, by Schur's test, $K$ is a Hilbert-Schmidt kernel and thus
\begin{align*}
\left|\int\int_{\tau=\tau_1+\tau_2} K(\tau_1,\tau_2)w_1(\tau_1,\xi)w_2(\tau_2,\xi) d\tau_1d\tau_2 d\xi\right|&\lesssim t^{0^+}\int \|w_1(\xi)\|_{L^2_{\tau}}\|w_2(\xi)\|_{L^2_{\tau}}d\xi \\&\lesssim t^{0^+}\|w_1\|_{L^2_{\tau,\xi}}\|w_2\|_{L^2_{\tau,\xi}} = t^{0^+}\|u_1\|_{X^{0,b'}}\|u_2\|_{X^{0,b}}.
\end{align*}
\end{proof}

For $k>1$ fixed and given $\xi\in \R$, define
$$
\Gamma_\xi = \{(\xi_1,\dots,\xi_k)\in \R^k: \xi=\xi_1+\dots+\xi_k\}
$$
and 
$$
\Phi=\xi^3-\sum_{j=1}^k \xi_j^3.
$$
To simplify some notations, we write $\xi=\xi_0$ and $\tau=\tau_0$. Moreover, if $A\subset \{0,\dots, k\}$ is a set of indices, we abbreviate ``$\xi_j, \ j\in A$'' as ``$\xi_{j\in A}$''.
\begin{lem}[Reduction to frequency-restricted estimates]\label{lem:interpol}
	Let $M={M}(\xi_1,\dots,\xi_k)$ be a positive multiplier and $s, s_j\in\R$. Suppose that there exist $\emptyset\neq A \subsetneq \{0,\dots,k\}$, ${M}_n={M}_n(\xi_1,\dots,\xi_k)\ge 0$, $n=1,2$, such that
	$$
	\left({M}_1{M}_2\right)^{\frac{1}{2}}=\frac{M\jap{\xi}^{s'}}{\prod_{j=1}^k\jap{\xi_j}^{s_j}}
	$$
	and, for any $K>0$,
	\begin{equation}
		\sup_{\xi_{j\notin A},\alpha} \int_{\Gamma_\xi}{M}_1\fia d\xi_{j\in A} + 	\sup_{\xi_{j\in A},\alpha} \int_{\Gamma_\xi}{M}_2\fia d\xi_{j\notin A}  \lesssim K^{1^-}.
	\end{equation}
	Then 
\begin{equation}\label{eq:multibourg}
		\left\| {M}^\vee v_1\dots v_k\right\|_{X^{s,b'}}\lesssim \prod_{j=1}^k\|v_j\|_{X^{s_j,b}}
\end{equation}
	\begin{equation}\label{eq:multibourgKgrande}
		\left\| \left({M}\mathbbm{1}_{|\Phi|>K}\right)^\vee v_1\dots v_k\right\|_{X^{s,b'}}\lesssim K^{0^-}\prod_{j=1}^k\|v_j\|_{X^{s_j,b}}
	\end{equation}
\begin{equation}\label{eq:multibourgKpeq}
		\left\| \left({M}\mathbbm{1}_{|\Phi|<K}\right)^\vee v_1\dots v_k\right\|_{X^{s,b'}}\lesssim K^{1^-}\prod_{j=1}^k\|v_j\|_{X^{s_j,b}}
\end{equation}
\begin{equation}\label{eq:multibourgPhi}
	\left\| \left(\frac{{M}\mathbbm{1}_{|\Phi|>K}}{|\Phi|}\right)^\vee v_1\dots v_k\right\|_{X^{s,b'}}\lesssim K^{0^-}\|v_1\|_{X^{s_k,b'}}\prod_{j=2}^{k}\|v_j\|_{X^{s_j,b}}.
\end{equation}
	and
\begin{equation}\label{eq:multibourgPhi2}
		\left\| \left(\frac{{M}\mathbbm{1}_{|\Phi|>K}}{|\Phi|}\right)^\vee v_1\dots v_k\right\|_{X^{s,b}}\lesssim K^{0^-}\prod_{j=1}^{k}\|v_j\|_{X^{s_j,b}}.
\end{equation}
\end{lem}
\begin{nb}
	The first estimate was proved in \cite[Lemma 2]{COS23}. The last four estimates lead us to conclude that restrictions to sublevel sets of the resonance function $\Phi$ are compatible with multilinear estimates in Bourgain spaces.
\end{nb}
\begin{proof}
	By symmetry, we can suppose that $0\in A$. We begin with \eqref{eq:multibourg}. By duality, we want to prove that
\begin{equation}\label{eq:versaoL2}
		\left|\int_{\Gamma_\xi, \Gamma_\tau} \frac{M\jap{\xi}^s\jap{\tau-\xi^3}^{b'}}{\prod_{j=1}^k\jap{\xi_j}^{s_j}\jap{\tau_j-\xi_j^3}^b}w_1\dots w_k w \right|\lesssim \prod_{j=1}^k \|w_j\|_{L^2}
\end{equation}
	We argue by interpolation. First, suppose that $w_{j\in A}\in L^\infty$ and $w_{j\notin A}\in L^1$. Then
	\begin{align}
			&\left|\int_{\Gamma_\xi, \Gamma_\tau} \frac{M_1\jap{\tau-\xi^3}^{2b'}}{\prod_{j\in A, j\neq 0}\jap{\tau_j-\xi_j^3}^{2b}}w_1\dots w_k w \right| \\\lesssim\ & \left(	\sup_{\xi_{j\notin A},\tau_{j\notin A}}\int_{\Gamma_\xi, \Gamma_\tau} \frac{M_1\jap{\tau-\xi^3}^{2b'}}{\prod_{j=1}^k\jap{\tau_j-\xi_j^3}^{2b}} d\xi_{j\in A} d\tau_{j\in A}\right)\left(\prod_{j\in A}\|w_j\|_{L^\infty}\right)\left(\prod_{j\notin A}\|w_j\|_{L^1}\right)\label{eq:interp3}
	\end{align}
	Integrating first in the $\tau$ variables,
	\begin{align}
		\int_{\Gamma_\tau} \frac{\jap{\tau-\xi^3}^{2b'}}{\prod_{j=1}^k\jap{\tau_j-\xi_j^3}^{2b}} d\tau_{j\in A} \lesssim \frac{1}{\jap{\Phi - \sum_{j\notin A}(\tau_j-\xi_j^3)}^{-2b'}}.\label{eq:integraltau}
	\end{align}
	Setting $\alpha=\sum_{j\notin A}(\tau_j-\xi_j^3)$,
	\begin{align}
		\sup_{\xi_{j\notin A},\tau_{j\notin A}}\int_{\Gamma_\xi, \Gamma_\tau} \frac{M_1\jap{\tau-\xi^3}^{2b'}}{\prod_{j=1}^k\jap{\tau_j-\xi_j^3}^{2b}} d\xi_{j\in A} d\tau_{j\in A} &\lesssim 	\sup_{\xi_{j\notin A},\alpha}\int_{\Gamma_\xi} \frac{M_1}{\jap{\Phi-\alpha}^{-2b'}} d\xi_{j\in A}\label{eq:diadic} \\&\lesssim 	\sup_{\xi_{j\notin A},\alpha} \sum_{K {dyadic}} \frac{1}{K^{-2b'}}\int_{\Gamma_\xi} M_1\mathbbm{1}_{|\Phi-\alpha|<K} d\xi_{j\in A} \\&\lesssim \sum_{K {dyadic}} \frac{K^{1^-}}{K^{-2b'}}<\infty.
	\end{align}
Similarly, if one supposes that $w_{j\in A}\in L^1$ and $w_{j\notin A}\in L^\infty$,
	\begin{align}\label{eq:interp2}
	\left|\int_{\Gamma_\xi, \Gamma_\tau} \frac{M_2}{\prod_{j\notin A}\jap{\tau_j-\xi_j^3}^{2b}}w_1\dots w_k w \right| \lesssim \left(\prod_{j\in A}\|w_j\|_{L^1}\right)\left(\prod_{j\notin A}\|w_j\|_{L^\infty}\right).
\end{align}
By Stein's interpolation, we deduce \eqref{eq:versaoL2}.

\medskip
For \eqref{eq:multibourgKgrande}, since
$$
\tau-\xi^3 - \sum_{j=1}^k (\tau_j-\xi_j^3) = \Phi,
$$
we can suppose, without loss of generality, that $|\tau-\xi^3|\gtrsim |\Phi| > K$. Going back to \eqref{eq:integraltau}, we can refine the estimate as
	\begin{align}
	\int_{\Gamma_\tau} \frac{\jap{\tau-\xi^3}^{2b'}}{\prod_{j=1}^k\jap{\tau_j-\xi_j^3}^{2b}} d\tau_{j\in A} \lesssim 	\int_{\Gamma_\tau} \frac{\jap{\tau-\xi^3}^{2b'+0^+}K^{0^-}}{\prod_{j=1}^k\jap{\tau_j-\xi_j^3}^{2b}} d\tau_{j\in A} \lesssim  \frac{K^{0^-}}{\jap{\Phi - \sum_{j\notin A}(\tau_j-\xi_j^3)}^{-2b'+0^-}}
\end{align}
which gives the claimed factor $K^{0^-}$.

\medskip
Estimate \eqref{eq:multibourgKpeq} follows from dropping the $\Phi-\alpha$ weight in \eqref{eq:diadic} on both sides of the interpolation:
	\begin{align}
	\sup_{\xi_{j\notin A},\tau_{j\notin A}}\int_{\Gamma_\xi, \Gamma_\tau} \frac{M_1\mathbbm{1}_{|\Phi|<K}\jap{\tau-\xi^3}^{2b'}}{\prod_{j=1}^k\jap{\tau_j-\xi_j^3}^{2b}} d\xi_{j\in A} d\tau_{j\in A} &\lesssim 	\sup_{\xi_{j\notin A},\alpha}\int_{\Gamma_\xi} \frac{M_1\mathbbm{1}_{|\Phi|<K}}{\jap{\Phi-\alpha}^{-2b'}} d\xi_{j\in A} \\&\lesssim 	\sup_{\xi_{j\notin A},\alpha}\int_{\Gamma_\xi} M_1\mathbbm{1}_{|\Phi|<K} d\xi_{j\in A} \lesssim K^{1^-}.
\end{align}

\medskip
Finally, we consider \eqref{eq:multibourgPhi}, which is equivalent to 
\begin{equation}
	\left|\int_{\Gamma_\xi, \Gamma_\tau} \frac{M\jap{\xi}^s\jap{\tau-\xi^3}^{b'}\mathbbm{1}_{|\Phi|>K}}{\jap{\xi_1}^{s_1}\jap{\tau_1-\xi_1^3}^{b'}\prod_{j\neq0,1}\jap{\xi_j}^{s_j}\jap{\tau_j-\xi_j^3}^b|\Phi|}w_1\dots w_k w \right|\lesssim \prod_{j=1}^k \|w_j\|_{L^2}
\end{equation}
Let us consider the worst-case scenario, where the modulation variables are ordered as
$$
 |\tau_1-\xi_1^3| \ge  |\tau-\xi^3| \ge  |\tau_2-\xi_2^3| \ge \dots \ge |\tau_k-\xi_k^3|.
$$
If $|\tau_1-\xi_1^3|\sim |\tau-\xi^3|$, then we interpolate between the estimates
\begin{align*}
	\left|\int_{\Gamma_\xi, \Gamma_\tau} \frac{M_1\mathbbm{1}_{|\Phi|>K}}{\prod_{ j\neq 0,1}\jap{\tau_j-\xi_j^3}^{b}|\Phi|}w_1\dots w_k w \right| \lesssim\ \left(\prod_{j\in A}\|w_j\|_{L^\infty_\xi L^2_\tau}\right)\left(\prod_{j\notin A}\|w_j\|_{L^1_\xi L^2_\tau}\right)
\end{align*}
and
\begin{align*}
	\left|\int_{\Gamma_\xi, \Gamma_\tau} \frac{M_2\mathbbm{1}_{|\Phi|>K}}{\prod_{ j\neq 0,1}\jap{\tau_j-\xi_j^3}^{b}|\Phi|}w_1\dots w_k w \right| \lesssim\ \left(\prod_{j\in A}\|w_j\|_{L^1_\xi L^2_\tau}\right)\left(\prod_{j\notin A}\|w_j\|_{L^\infty_\xi L^2_\tau}\right)
\end{align*}
For the first,
	\begin{align*}
	&\	\left|\int_{\Gamma_\xi, \Gamma_\tau} \frac{M_1\mathbbm{1}_{|\Phi|>K}}{\prod_{ j\neq 0,1}\jap{\tau_j-\xi_j^3}^{b}|\Phi|}w_1\dots w_k w \right| \\\lesssim\ & \int_{\Gamma_\xi} \frac{M_1\mathbbm{1}_{|\Phi|>K}}{|\Phi|}\left(	\sup_{\tau}\int_{ \Gamma_\tau} \frac{1}{\prod_{j\neq 0,1}\jap{\tau_j-\xi_j^3}^{2b}} d\tau_2\dots d\tau_k\right)\prod_{j=0}^k \|w_j(\xi_j)\|_{L^2_\tau}\\\lesssim\ & \left[\sup_{\xi_{j\notin A}}\int_{\Gamma_\xi} \frac{M_1\mathbbm{1}_{|\Phi|>K}}{|\Phi|}\left(	\sup_{\tau}\int_{ \Gamma_\tau} \frac{1}{\prod_{j\neq 0,1}\jap{\tau_j-\xi_j^3}^{2b}} d\tau_2\dots d\tau_k\right)d\xi_{j\in A}\right]\left(\prod_{j\in A}\|w_j\|_{L^\infty_\xi L^2_\tau}\right)\left(\prod_{j\notin A}\|w_j\|_{L^1_\xi L^2_\tau}\right).
\end{align*}
Since $b>1/2$, the integral in the time frequencies is bounded and 
\begin{align*}
\sup_{\xi_{j\notin A}}\int_{\Gamma_\xi} \frac{M_1\mathbbm{1}_{|\Phi|>K}}{|\Phi|}\left(	\sup_{\tau}\int_{ \Gamma_\tau} \frac{1}{\prod_{j\neq 0,1}\jap{\tau_j-\xi_j^3}^{2b}} d\tau_2\dots d\tau_k\right)d\xi_{j\in A} &\lesssim \sup_{\xi_{j\notin A}}\int_{\Gamma_\xi} \frac{M_1\mathbbm{1}_{|\Phi|>K}}{|\Phi|} d\xi_{j\in A}\\ &\lesssim \sum_{K'>K} \frac{1}{K'}\sup_{\xi_{j\notin A}}\int_{\Gamma_\xi} M_1\mathbbm{1}_{|\Phi|<K'} d\xi_{j\in A} \\&\lesssim \sum_{K'>K} (K')^{0^-} \lesssim K^{0^-}.
\end{align*}
The other side of the interpolation is completely analogous. If $|\tau_1-\xi_1^3|<1$, the integrals in $\tau$ are still bounded and the same estimates hold. We are left with the case $|\tau_1-\xi_1^3|\gg \max\{1,|\tau-\xi^3|\}$. Consider the case where $1\in A$, as the other case follows from similar computations. We interpolate between
\begin{align}\label{eq:interp}
	\left|\int_{\Gamma_\xi, \Gamma_\tau} \frac{M_1\mathbbm{1}_{|\Phi|>K}\jap{\tau-\xi^3}^{2b'}}{\jap{\tau_1-\xi_1^3}^{2b'}\prod_{j\in A, j\neq 0,1}\jap{\tau_j-\xi_j^3}^{2b}|\Phi|^2}w_1\dots w_k w \right| \lesssim\ \left(\prod_{j\in A}\|w_j\|_{L^\infty}\right)\left(\prod_{j\notin A}\|w_j\|_{L^1}\right)
\end{align}
and \eqref{eq:interp2}. To prove \eqref{eq:interp}, notice that $|\tau_1-\xi_1^3|\sim |\Phi|$. Thus
\begin{align}
		&\left|\int_{\Gamma_\xi, \Gamma_\tau} \frac{M_1\mathbbm{1}_{|\Phi|>K}\jap{\tau-\xi^3}^{2b'}}{\jap{\tau_1-\xi_1^3}^{2b'}\prod_{j\in A, j\neq 0}\jap{\tau_j-\xi_j^3}^{2b}|\Phi|^2}w_1\dots w_k w \right|\\ \lesssim \ &	\int_{\Gamma_\xi, \Gamma_\tau} \frac{M_1\jap{\tau-\xi^3}^{2b'}K^{0^-}}{\jap{\tau_1-\xi_1^3}^{2b'+2+0^-}\prod_{j\in A, j\neq 0}\jap{\tau_j-\xi_j^3}^{2b}}|w_1\dots w_k w| 
\end{align}
and the proof now follows as done for \eqref{eq:interp3}. The proof of \eqref{eq:multibourgPhi2} follows from analogous computations.
\end{proof}

%\begin{lem}\label{lem:reducao}
%	Given $k>1$, define
%	$$
%	\Phi=\xi^3-\sum_{j=1}^k \xi_j^3,\quad \overline{\Gamma}_k:=\{(\xi,\xi_1,\dots,\xi_k)\in \R^{k+1} : \xi=\xi_1+\dots+\xi_k\}.
%	$$
%	Suppose that, for some positive multiplier ${M}={M}(\xi_1,\dots,\xi_k)$,
%	$$
%	\left\|\int_{\overline{\Gamma}_k} \frac{{M}\jap{\xi}^s}{\prod_{j=1}^k\jap{\xi_j}^{s_j}} \mathbbm{1}_{|\Phi|\sim K} v_1\dots v_k \right\|_{L^2}\lesssim K^{1^-}\prod_{j=1}^k\|v_j\|_{L^2},\quad K>1.
%	$$
%	Then
%	$$
%	\left\| {M}^\vee v_1\dots v_k\right\|_{X^{s,b'}}\lesssim \prod_{j=1}^k\|v_j\|_{X^{s_j,b}}
%	$$
%	If the inequality holds for $K>0$, then
%	$$
%	\left\| \left({M}\mathbbm{1}_{|\Phi|>K}\right)^\vee v_1\dots v_k\right\|_{X^{s,b'}}\lesssim CK^{0^-}\prod_{j=1}^k\|v_j\|_{X^{s_j,b}}
%	$$
%		$$
%	\left\| \left({M}\mathbbm{1}_{|\Phi|<K}\right)^\vee v_1\dots v_k\right\|_{X^{s,b'}}\lesssim CK^{1^-}\prod_{j=1}^k\|v_j\|_{X^{s_j,b}}
%	$$
%	and
%	$$
%	\left\| \left(\frac{{M}\mathbbm{1}_{|\Phi|>K}}{|\Phi|}\right)^\vee v_1\dots v_k\right\|_{X^{s,b}}\lesssim CK^{0^-}\prod_{j=1}^k\|v_j\|_{X^{s_j,b}}.
%	$$
%\end{lem}

\begin{nb}
	The choice of $A$, $M_1$ and $M_2$ may depend on the relation between the various spatial frequencies. As long as the number of such regions is finite, one first decomposes onto each region, chooses $A$, $M_1$ and $M_2$ appropriately, applies Lemma \ref{lem:interpol} and then sums up the contribution from each region. For the sake of simplicity, we will often omit this detail.
\end{nb}

We now recall the frequency restricted estimates proved in \cite[Proof of Proposition 6]{COS23}:
\begin{lem}[Basic smoothing estimate]
	There exist $\emptyset\neq A \subsetneq \{0,\dots,k\}$, ${M}_n={M}_n(\xi_1,\dots,\xi_k)\ge 0$, $n=1,2$, such that
	$$
	\left({M}_1{M}_2\right)^{\frac{1}{2}}=\max_{j=1,\dots, 4}|\xi_j|\jap{\xi_j}^{(1/2)^-}
	$$
	and, for any $K>0$,
	\begin{equation}\label{eq:fre}
		\sup_{\xi_{j\notin A},\alpha} \int_{\Gamma_\xi}{M}_1\fia d\xi_{j\in A} + 	\sup_{\xi_{j\in A},\alpha} \int_{\Gamma_\xi}{M}_2\fia d\xi_{j\notin A}  \lesssim K^{1^-}.
	\end{equation}
In particular, by Lemma \ref{lem:interpol},
\begin{equation}\label{eq:nonlinearsmooth1}
	\||\nabla|u_1u_2u_3u_4\|_{X^{(1/2)^-,b'}}\lesssim \prod_{j=1}^4\|u_j\|_{X^{0,b}}.
\end{equation}
%	For $K>0$,
%\begin{equation}\label{eq:basicsmooth}
%	\left\|\int {|\xi|\jap{\xi}^{(1/2)^-}}\mathbbm{1}_{|\Phi|\sim K}u_1u_2u_3u_4\right\|_{L^2}\lesssim \prod_{j=1}^4\|u_j\|_{L^2}K^{1^-}.
%\end{equation}
\end{lem}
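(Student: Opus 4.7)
The plan is to reduce the estimate to a sublevel-set bound for the resonance function $\Phi$ and then perform a non-stationary phase analysis tailored to the ordering of $|\xi_1|,\dots,|\xi_4|$. By symmetry of the target weight $\max_j|\xi_j|\jap{\xi_j}^{(1/2)^-}$, I would first restrict to the region $|\xi_1|=\max_j|\xi_j|$ and pay a combinatorial factor at the end, so that the weight becomes $|\xi_1|\jap{\xi_1}^{(1/2)^-}$.

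Parametrizing $\Gamma_\xi$ by letting $\xi_1=\xi_0-\xi_2-\xi_3-\xi_4$ be the dependent variable, a direct computation gives $\partial_{\xi_j}\Phi = 3(\xi_1^2-\xi_j^2)$ for $j=2,3,4$. In the principal sub-region $|\xi_1|\gg|\xi_2|,|\xi_3|,|\xi_4|$, each such derivative has size $\sim|\xi_1|^2$. I would take $A=\{0,2\}$, $M_1=|\xi_1|^2$ and $M_2=\jap{\xi_1}^{1-}$, so that $(M_1 M_2)^{1/2}=|\xi_1|\jap{\xi_1}^{(1/2)^-}$. The $M_1$-estimate reduces to a single $d\xi_2$-integration over $\{|\Phi-\alpha|<K\}$; the non-stationary phase bound yields a one-dimensional measure $\lesssim K/|\xi_1|^2$, and multiplication by $M_1$ gives $K\lesssim K^{1-}$. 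For the $M_2$-estimate there are two integration variables (say $\xi_3, \xi_4$, with $\xi_1$ determined by $\xi_0, \xi_2$); integrating first in $\xi_3$ using $|\partial_{\xi_3}\Phi|\sim|\xi_1|^2$ contributes a factor $K/|\xi_1|^2$, the remaining $\xi_4$-support has size $\lesssim|\xi_1|$, and $\jap{\xi_1}^{1-}$ is essentially constant in this regime, for a total of $\jap{\xi_1}^{1-}\cdot(K/|\xi_1|^2)\cdot|\xi_1|\lesssim K^{1-}$.

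The main obstacle is the degenerate regime where $|\xi_2|$, or further $|\xi_3|, |\xi_4|$, is comparable to $|\xi_1|$, since then $\partial_{\xi_j}\Phi = 3(\xi_1^2-\xi_j^2)$ may vanish and the direct non-stationary phase argument fails. I would address this by exploiting the cubic structure of $\Phi$: restricted to any one-dimensional affine slice of $\Gamma_\xi$, $\Phi$ is a polynomial of degree three whose leading coefficient is bounded below by an absolute constant, so the classical sublevel-set estimate for polynomials gives a one-dimensional measure $\lesssim K^{1/3}$. Combining this with an alternative choice $|A|=3$, which provides two integration directions simultaneously, together with the trivial support bound $|\xi_j|\lesssim|\xi_1|$ in the degenerate directions, recovers the $K^{1-}$ decay. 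Once \eqref{eq:fre} is established, the multilinear estimate \eqref{eq:nonlinearsmooth1} follows by feeding $(M_1,M_2)$ into Lemma~\ref{lem:interpol} with $s=(1/2)^-$ and $s_j=0$; this specializes the argument of \cite[Proof of Proposition 6]{COS23} to the quartic KdV setting.
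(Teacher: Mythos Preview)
Your non-stationary analysis in the principal region $|\xi_1|\gg|\xi_2|,|\xi_3|,|\xi_4|$ is essentially the argument of \cite{COS23} (which is all the paper invokes here). Two minor remarks: with $A=\{0,2\}$ the $M_1$-integral fixes $\xi_1,\xi_3,\xi_4$ and makes $\xi_0$ dependent, so the relevant derivative is $3(\xi_0^2-\xi_2^2)$ rather than $3(\xi_1^2-\xi_2^2)$; this is harmless since $|\xi_0|\sim|\xi_1|$ there. Also, the direct change of variables produces $K$, not $K^{1^-}$; one inserts a preliminary H\"older step with exponent $p=1^+$ (as in the proof of Lemma~\ref{lem:refined}) to shave the power.

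The genuine gap is in the degenerate regime where several $|\xi_j|$ are comparable to $|\xi_1|$. Your key claim---that on any one-dimensional affine slice of $\Gamma_\xi$ the resonance $\Phi$ is a cubic with leading coefficient bounded below---is false. Along a direction $v=(v_0,\dots,v_4)$ on the constraint surface the cubic coefficient of $t\mapsto\Phi(\xi+tv)$ is $v_0^3-\sum_{j=1}^4 v_j^3$, which vanishes for every ``coordinate'' direction obtained by eliminating one variable through the convolution relation (e.g.\ $v=(1,0,1,0,0)$ or $(0,-1,1,0,0)$). On such slices $\Phi$ is only quadratic, and its second derivative---for instance $6(\xi_1+\xi_3+\xi_4)$ when $\xi_1,\xi_3,\xi_4$ are fixed---can itself be arbitrarily small in the comparable-frequency region; when it vanishes $\Phi$ is constant and the sublevel set has full measure. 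Hence neither the $K^{1/3}$ bound nor the vague ``$|A|=3$ plus trivial support'' combination yields $K^{1^-}$ with an acceptable power of $|\xi_1|$.

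The argument actually used in \cite{COS23}, and reproduced in this paper in Case~A(b) of Lemma~\ref{lem:refined}, handles the degenerate region differently: one rescales $p_j=\xi_j/\xi$ so that $\Phi=\xi^3P$ with $P=1-\sum p_j^3$, checks by direct computation that the critical points of $P$ (in two free variables) are all \emph{non-degenerate}, and applies Morse's lemma to write $P=P_0\pm q_1^2\pm q_2^2$ locally. The two-dimensional sublevel-set measure of this quadratic form is $\lesssim (K/|\xi|^3)^{1^-}$, which after undoing the rescaling and multiplying by the weight gives exactly $K^{1^-}$. This Morse step is what replaces your polynomial sublevel-set argument.
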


\begin{lem}\label{lem:pointwise}
	On $\Gamma_5$, we have the pointwise estimate
	\begin{equation}\label{eq:pointwise}
		\left|\frac{m_1^2\xi_1+\dots +m_5^2\xi_5}{m_1\dots m_5}\right|\lesssim \frac{\max_{j=1,\dots,5}|\xi_j|\jap{\xi_j}^{(1/2)^-}}{N^{(1/2)^-}}.
	\end{equation}
	
\end{lem}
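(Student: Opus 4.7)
The plan is a direct pointwise case analysis using only the monotonicity of $m$ and the negativity of $s$. By symmetry of the multiplier, assume $|\xi_1|\geq|\xi_2|\geq\dots\geq|\xi_5|$; since $|\xi|\jap{\xi}^{(1/2)^-}$ is non-decreasing in $|\xi|$, the maximum on the right-hand side equals $|\xi_1|\jap{\xi_1}^{(1/2)^-}$. If $|\xi_1|\leq N$, then $m_j=1$ for every $j$ and on $\Gamma_5$ the numerator collapses to $\sum_j\xi_j=0$, so the bound is trivial. Henceforth assume $|\xi_1|>N$.

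For the numerator, the triangle inequality combined with $m_j\leq 1$ (which holds since $s<0$) gives
$$|m_1^2\xi_1+\dots+m_5^2\xi_5|\leq \sum_j|\xi_j|\leq 5|\xi_1|.$$
For the denominator I claim $m_j\geq(|\xi_1|/N)^s$ for every $j$: if $|\xi_j|\leq N$ then $m_j=1\geq(|\xi_1|/N)^s$, using $s<0$ and $|\xi_1|/N\geq 1$; if $|\xi_j|>N$ then $m_j=(|\xi_j|/N)^s\geq(|\xi_1|/N)^s$, using $s<0$ and $|\xi_j|\leq|\xi_1|$. Multiplying over $j$ yields $\prod_j m_j\geq (|\xi_1|/N)^{5s}$, whence
$$\left|\frac{m_1^2\xi_1+\dots+m_5^2\xi_5}{m_1\cdots m_5}\right|\lesssim |\xi_1|\left(\frac{|\xi_1|}{N}\right)^{-5s}.$$

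Since $|\xi_1|>N\gg 1$, we have $\jap{\xi_1}\sim|\xi_1|$, and the right-hand side of the claimed inequality is comparable to $|\xi_1|(|\xi_1|/N)^{(1/2)^-}$. It therefore suffices to verify $(|\xi_1|/N)^{-5s}\lesssim (|\xi_1|/N)^{(1/2)^-}$, which, since $|\xi_1|/N\geq 1$, reduces to the exponent inequality $-5s\leq(1/2)^-$, i.e.\ $5|s|<1/2$. This is the only point where the regularity constraint enters, and there is no real obstacle: for $s>-1/24$ one has $5|s|<5/24<1/2$, leaving ample room to select the exponent $(1/2)^-$ in the statement (in fact the same naive argument works for any $s>-1/10$). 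The sharper regularity threshold of Theorem \ref{thm:main} is not extracted from this pointwise multiplier bound but from the subsequent frequency-restricted multilinear estimates.
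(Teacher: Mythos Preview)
Your argument is correct and follows the same case analysis as the paper. The only difference is that the paper retains the factor $m_1^2$ when bounding the numerator (using $|m_j^2\xi_j|\le m_1^2|\xi_1|$ for every $j$), which after dividing by $\prod_j m_j\ge m_1^5$ yields $|\xi_1|/m_1^3$ rather than your $|\xi_1|/m_1^5$; this gives the exponent condition $-3s\le(1/2)^-$, so the estimate holds throughout the full local well-posedness range $s>-1/6$ instead of only $s>-1/10$.
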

\begin{proof}
	For the sake of simplicity, assume that $|\xi_1|\ge \dots \ge |\xi_5|$. If all frequencies are smaller than $N$, then
	$$
	m_1^2\xi_1+\dots +m_5^2\xi_5 = \xi_1+\dots + \xi_5 =0.
	$$
	Otherwise, we have $|\xi_1|> N$, $m_1\le \dots \le m_5$ and $m_1^2\xi_1 \ge \dots \ge m_5^2\xi_5$. Thus
	\begin{equation}
		\left|\frac{m_1^2\xi_1+\dots +m_5^2\xi_5}{m_1\dots m_5}\right|\lesssim	\frac{m_1^2|\xi_1|}{m_1^5} \sim \frac{|\xi_1|^{1-3s}}{N^{-3s}}\lesssim \frac{|\xi_1|\jap{\xi_1}^{(1/2)^-}}{N^{(1/2)^-}}
	\end{equation}
	since $s>-1/6$.
\end{proof}

\begin{cor}\label{cor:gps}
Estimate \eqref{eq:N12_0} holds.
\end{cor}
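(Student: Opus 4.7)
The plan is to reduce \eqref{eq:N12_0} to the basic smoothing estimate \eqref{eq:nonlinearsmooth1} via the pointwise multiplier bound of Lemma \ref{lem:pointwise}. First I would substitute $v_j=Iu_j$, so that $\widehat{v_j}=m_j\widehat{u_j}$, and absorb the $m_j$ factors into the multiplier: the left-hand side of \eqref{eq:N12_0} equals $\|\widetilde{M}^\vee v_1v_2v_3v_4\|_{X^{0,b'}}$, where
$$\widetilde{M}(\xi_1,\dots,\xi_4)=\frac{m_1^2\xi_1+\cdots+m_4^2\xi_4-m_{1234}^2\xi_{1234}}{m_1m_2m_3m_4m_{1234}}.$$

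Next I would introduce the auxiliary variable $\xi_5=-\xi_{1234}$, so that $(\xi_1,\dots,\xi_5)\in\Gamma_5$, and use that $m$ is even to write $m_5=m(\xi_5)=m_{1234}$. Then $\widetilde M$ takes the fully symmetric form
$$\widetilde M=\frac{m_1^2\xi_1+\cdots+m_5^2\xi_5}{m_1m_2m_3m_4m_5}$$
on $\Gamma_5$, which is exactly the object controlled by Lemma \ref{lem:pointwise}. That lemma yields the pointwise bound
$$|\widetilde{M}|\lesssim N^{(-1/2)^+}\max_{j=1,\dots,5}|\xi_j|\jap{\xi_j}^{(1/2)^-}.$$
Since $x\mapsto|x|\jap{x}^{(1/2)^-}$ is increasing in $|x|$ and $|\xi_5|=|\xi_{1234}|\le|\xi_1|+\cdots+|\xi_4|$, the five-index maximum above is comparable to the maximum over $j=1,\dots,4$, which is precisely the majorant appearing in the Basic smoothing estimate.

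The final step is then immediate: Lemma \ref{lem:monot}, combined with the Basic smoothing estimate (i.e., Lemma \ref{lem:interpol} fed with the frequency-restricted bound \eqref{eq:fre}), gives
$$\|\widetilde{M}^\vee v_1v_2v_3v_4\|_{X^{0,b'}}\lesssim N^{(-1/2)^+}\prod_{j=1}^4\|v_j\|_{X^{0,b}}=N^{(-1/2)^+}\prod_{j=1}^4\|Iu_j\|_{X^{0,b}},$$
which is exactly \eqref{eq:N12_0}. There is no substantive obstacle here: the whole argument amounts to the algebraic observation that the 4-linear multiplier in \eqref{eq:N12_0}, once rewritten in terms of $v_j=Iu_j$ and extended via the auxiliary variable $\xi_5=-\xi_{1234}$, is exactly the symmetric quintic multiplier of Lemma \ref{lem:pointwise}, so that the pointwise bound plus monotonicity and the basic smoothing estimate close the proof.
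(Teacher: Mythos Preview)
Your proposal is correct and follows essentially the same route as the paper: rewrite the estimate in terms of $v_j=Iu_j$ with $\xi_5=-\xi_{1234}$, invoke the pointwise bound of Lemma~\ref{lem:pointwise}, and conclude via Lemma~\ref{lem:monot} together with the basic smoothing estimate~\eqref{eq:nonlinearsmooth1}. Your explicit remark that the maximum over $j=1,\dots,5$ is comparable to the maximum over $j=1,\dots,4$ (since $|\xi_5|\le|\xi_1|+\cdots+|\xi_4|$) is a small clarification the paper leaves implicit by simply assuming an ordering of the frequencies.
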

\begin{proof}
Rewrite the estimate as
	\begin{equation}\label{eq:N12}
		\left\| \left(\frac{m_1^2\xi_1+\dots+m_5^2\xi_5}{\prod_{j=1}^5m_j}\right)^\vee v_1v_2v_3v_4 \right\|_{X^{0,b'}}\lesssim N^{(-1/2)^+}\prod_{j=1}^4\|v_j\|_{X^{0,b}},\quad \xi_5=-\xi_{1234}
	\end{equation}
	Assume that $|\xi_1|\ge \dots\ge |\xi_5|$. Since, by Lemma \ref{lem:pointwise},
	$$
\frac{m_1^2\xi_1+\dots+m_5^2\xi_5}{\prod_{j=1}^5m_j}\lesssim \frac{|\xi_1|\jap{\xi_1}^{(1/2)^-}}{N^{(1/2)^-}},
	$$
	\eqref{eq:N12} follows from \eqref{eq:nonlinearsmooth1} through Lemma \ref{lem:monot}.
\end{proof}

%
%Define the region
%$$
%D=\{|\xi|\simeq |\xi_1|\simeq |\xi_2|\simeq |\xi_3| \gg |\xi_4|\}.
%$$
%and, given $K>0$,
%$$
%A_K=D^c\cap \{ |\xi|>1, |\Phi|\sim K\} .
%$$
%\begin{lem}[Refined smoothing over $A_K$]
%	For $b=\frac{1}{2}^+$, $b'=(b-1)^+$ and $\epsilon=1^-$,
%	$$
%	\left\|\widehat{\mathbbm{1}}_{A_K}|\nabla|u_1u_2u_3u_4\right\|_{X^{\epsilon,b'}}\lesssim \left(\prod_{j=1}^3\|u_j\|_{X^{0,b}}\right)\|u_4\|_{X^{1/2^-,b}}K^{1^-}.
%	$$ 
%\end{lem}

%
%Recall the definition of the modified energy $E_2$,
%$$
%E_2=\Lambda_2(m_1m_2;u)-\Lambda_5\left(\frac{m_1^2\xi_1+\dots+m_5^2\xi_5}{\Phi_5}\mathbbm{1}_{|\Phi_5|>K};u\right)
%$$
%whose time derivative is given by
%$$
%\frac{dE_2}{dt} = \Lambda_5\left( \frac{m_1^2\xi_1+\dots m_5^2\xi_5}{5}\mathbbm{1}_{|\Phi_5|<K};u \right) - 5i\Lambda_8\left(\frac{m_1^2\xi_1+\dots+m_4^2\xi_4-m_{1234}^2\xi_{1234}}{\Phi_5}\xi_{5678}\mathbbm{1}_{|\Phi_5|>K};u \right)
%$$

%\begin{lem}
%	\begin{equation}
%	\left|	\int_0^t \Lambda_5\left( \frac{m_1^2\xi_1+\dots m_5^2\xi_5}{5}\mathbbm{1}_{|\Phi_5|<K};u \right) ds \right| \lesssim N^{(-1/2)^+}K^{1^-}\|Iu\|_{X^{0,1/2^+}}^5.
%	\end{equation}
%\end{lem}

\begin{proof}[Proof of Lemma \ref{lem:defiE2}]
We need to prove that
$$
\left| \int_{\Gamma_5} \frac{m_1^2\xi_1+\dots + m_5^2\xi_5}{\Phi_5m_1\dots m_5}\mathbbm{1}_{|\Phi_5|>K}v_1\dots v_5  \right|\lesssim N^{(-1/2)^+}K^{0^-} \prod_{j=1}^5 \|v_j\|_{L^2}.
$$
This follows from decomposing dyadically in $K$ and from an iterpolation argument (see the proof of Lemma \ref{lem:interpol}) between estimates \eqref{eq:pointwise}:
\begin{align*}
	\left| \int_{\Gamma_5} \frac{m_1^2\xi_1+\dots + m_5^2\xi_5}{\Phi_5m_1\dots m_5}\mathbbm{1}_{|\Phi_5|>K}v_1\dots v_5  \right| &\lesssim \sum_{K'>K} \frac{1}{K'}\int_{\Gamma_5} \frac{|m_1^2\xi_1+\dots + m_5^2\xi_5|}{m_1\dots m_5}\mathbbm{1}_{|\Phi_5|\sim K'}|v_1|\dots |v_5| \\&\lesssim \sum_{K'>K} \frac{1}{K'}\int_{\Gamma_5} \frac{\max_{j=1,\dots,5}|\xi_j|\jap{\xi_j}^{(1/2)^-}}{N^{(1/2)^-}}\mathbbm{1}_{|\Phi_5|\sim K'}|v_1|\dots |v_5|\\& \lesssim N^{(-1/2)^+}K^{0^-} \prod_{j=1}^5 \|v_j\|_{L^2}.
\end{align*}
\end{proof}
\begin{proof}[Proof of Lemma \ref{lem:L5}]
	We want to prove that
	\begin{equation}
		\left|	\int_0^t \Lambda_5\left( \frac{m_1^2\xi_1+\dots m_5^2\xi_5}{m_1\dots m_5}\mathbbm{1}_{|\Phi_5|<K};v \right) ds \right| \lesssim t^{0^+}N^{(-1/2)^+}K^{1^-}\|v\|_{X^{0,1/2^+}}^5.
	\end{equation}
	By duality,
	\begin{equation}
		\left|	\int_0^t \Lambda_5\left( \frac{m_1^2\xi_1+\dots m_5^2\xi_5}{m_1\dots m_5}\mathbbm{1}_{|\Phi_5|<K};v \right) ds \right| \lesssim t^{0^+}\left\|\left(\frac{m_1^2\xi_1+\dots m_5^2\xi_5}{m_1\dots m_5}\mathbbm{1}_{|\Phi_5|<K}\right)^\vee v^4 \right\|_{X^{0,b'}}\|v\|_{X^{0,b}}
	\end{equation}
	The estimate follows from the pointwise multiplier bound \eqref{eq:pointwise}, the frequency-restricted estimate \eqref{eq:fre} and Lemma \ref{lem:interpol}. 
\end{proof}

\subsection{Proof of Lemma \ref{lem:L8}}\label{sec:multi2} We now focus on the proof of Lemma \ref{lem:L8}. In equivalent form,
$$
\left|\int_0^t \int \frac{\mathcal{M}}{\prod_{j=1}^8 m_j} \hat{v}_1\dots \hat{v}_8 d\Xi ds\right| \lesssim t^{0^+}N^{(-1)^+}K^{0^-}\prod_{j=1}^8 \|v_j\|_{X^{0,b}},$$
where
$$
 \mathcal{M}=\frac{\mathcal{M}_1(\xi_1,\dots,\xi_4,-\xi_{1234})}{\Phi_5}\mathbbm{1}_{|\Phi_5|>K}\xi_{1234},\quad \mathcal{M}_1=m_1^2\xi_1+\dots+m_5^2\xi_5.
$$
Without loss of generality, we can suppose that $|\xi_1|\ge \dots \ge |\xi_4|$ and $|\xi_5|\ge \dots \ge |\xi_8|$. Moreover, if $|\xi_1|\ll N$, then $\mathcal{M}=0$, and so we can assume that $|\xi_1|\gtrsim N$. Following the heuristic of Section \ref{sec:descript}, we split this domain into three regions:
\begin{align*}
D_1&=\{|\xi_5|\gtrsim N\}\\
D_2&=B\cap D_1^c, \quad B:=\{(\xi_1,\dots, \xi_4,\xi_{1234})\in \R^5: |\xi_1|\simeq |\xi_1|\simeq |\xi_2|\simeq |\xi_3| \gg |\xi_{1234}|\}^c\\
D_3&=D_1^c\cap D_2^c.
\end{align*}

%Define the region
%$$
%D=\{(\xi,\xi_1,\dots, \xi_4)\in \R^5: |\xi|\simeq |\xi_1|\simeq |\xi_2|\simeq |\xi_3| \gg |\xi_4|\}.
%$$
%and
%$$
%A=D^c\cap \{(\xi,\xi_1,\dots, \xi_4)\in \R^5:  |\xi|>1\} .
%$$

\bigskip

\noindent\textbf{Control over $D_1$.} In this region, we can exploit the basic smoothing estimate twice.
%Define
%$$
%{\mathcal{M}}_2=\frac{{\mathcal{M}_1}(\xi_1,\dots,\xi_4,-\xi_{1234})}{\Phi_5}\mathbbm{1}_{|\Phi_5|>K}, \quad {\mathcal{M}}_3=\frac{\xi_{5678}m_{5678}}{\prod_{j=5}^8m_j}\mathbbm{1}_{D_1}
%$$
%so that 
%$$
%\frac{\mathcal{M}}{\prod_{j=1}^8 m_j}\mathbbm{1}_{D_1}=\mathcal{M}_2\mathcal{M}_3.
%$$
We estimate
\begin{align}
	\left|\int_0^t \int_{D_1} \frac{\mathcal{M}}{\prod_{j=1}^8 m_j} \hat{v}_1\dots \hat{v}_8 d\Xi ds\right| &\lesssim t^{0^+}\left\| \frac{{\mathcal{M}_1}(\xi_1,\dots,\xi_4,-\xi_{1234})}{\Phi_5m_{1234}\prod_{j=1}^4m_j}\mathbbm{1}_{|\Phi_5|>K} v_1v_2v_3v_4  \right\|_{X^{0,b}}\\&\qquad \times\left\| \frac{\xi_{5678}m_{5678}}{\prod_{j=5}^8m_j}\mathbbm{1}_{D_1}v_5v_6v_7v_8\right\|_{X^{0,b'}}\label{eq:D1}
\end{align}
The control of these two factors is the content of the next lemma.
\begin{lem}\label{lem:D1}
	Given $K>0$,
	$$
	\left\| \left(\frac{\mathcal{M}_1}{|\Phi|m_{1234}\prod_{j=1}^4m_j}\mathbbm{1}_{|\Phi|>K}\right)^\vee v_1v_2v_3v_4\right\|_{X^{0,b}}\lesssim N^{-1/2^-}K^{0^-} \prod_{j=1}^4\|v_j\|_{X^{0,b}}
	$$
	and
	$$
	\left\| \left(\frac{\xi m}{\prod_{j=1}^4m_j}\mathbbm{1}_{\{\max |\xi_j|>N\} }\right)^\vee v_1v_2v_3v_4\right\|_{X^{0,b'}}\lesssim N^{-1/2^-} \prod_{j=1}^4\|v_j\|_{X^{0,b}}
	$$
\end{lem}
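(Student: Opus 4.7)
The strategy for both inequalities in Lemma~\ref{lem:D1} is identical: establish a pointwise bound on the multiplier of the form $\lesssim \max_j|\xi_j|\jap{\xi_j}^{(1/2)^-}/N^{(1/2)^-}$, then invoke Lemma~\ref{lem:monot} together with either the basic smoothing estimate \eqref{eq:nonlinearsmooth1} (for the second inequality) or estimate \eqref{eq:multibourgPhi} of Lemma~\ref{lem:interpol} (for the first).

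For the second inequality, I would mimic the proof of Lemma~\ref{lem:pointwise} in the 4-linear setting. Assuming $|\xi_1|\ge\cdots\ge|\xi_4|$ with $|\xi_1|>N$ (otherwise the indicator vanishes), the monotonicity of $m$ in $|\xi|$ (which holds since $s<0$) gives $m\le 1$ and $m_j\ge m_1$ for $j\ge 2$, so
\[
\frac{|\xi|\,m}{\prod_{j=1}^4 m_j}\lesssim\frac{|\xi_1|}{m_1^4}=|\xi_1|\left(\frac{|\xi_1|}{N}\right)^{-4s}\lesssim \frac{|\xi_1|\jap{\xi_1}^{(1/2)^-}}{N^{(1/2)^-}},
\]
the last inequality using $-4s<1/2$, which is implied by $s>-1/8$ and thus by $s>-1/24$. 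The claim then closes via Lemma~\ref{lem:monot} applied to \eqref{eq:nonlinearsmooth1}.

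For the first inequality, I would substitute $\xi_5:=-\xi_{1234}$ and use the evenness of $m$ to identify $m_{1234}=m_5$; this makes $\mathcal{M}_1/(m_{1234}\prod_{j=1}^4 m_j)$ coincide with the 5-linear multiplier of Lemma~\ref{lem:pointwise}, yielding the pointwise bound $\max_{j=1,\ldots,5}|\xi_j|\jap{\xi_j}^{(1/2)^-}/N^{(1/2)^-}$. Since $|\xi_5|\lesssim\max_{j\le 4}|\xi_j|$, the max may be restricted to $j=1,\ldots,4$. Lemma~\ref{lem:monot} reduces the multiplier to the basic smoothing one, and the factor $\mathbbm{1}_{|\Phi|>K}/|\Phi|$ is then absorbed via \eqref{eq:multibourgPhi}, producing the $K^{0^-}$ decay.

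The main obstacle is that the first estimate claims $X^{0,b}$ output, which is a strictly stronger norm than the $X^{0,b'}$ delivered by Lemma~\ref{lem:interpol}. I would bridge the gap via a modulation decomposition based on the identity $\tau-\xi^3=\sum_j(\tau_j-\xi_j^3)+\Phi$. In the regime $\jap{\tau-\xi^3}\lesssim\max_j\jap{\tau_j-\xi_j^3}$, the surplus factor $\jap{\tau-\xi^3}^{b-b'}$ is absorbed into the dominant input's $\jap{\tau_j-\xi_j^3}^{b}$ weight, demoting that input from $X^{0,b}$ to the strictly weaker $X^{0,b'}$ and placing us in the exact setting of \eqref{eq:multibourgPhi}. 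In the complementary regime $\jap{\tau-\xi^3}\lesssim|\Phi|$, the pointwise bound $\jap{\tau-\xi^3}^{b-b'}|\Phi|^{-1}\lesssim|\Phi|^{b-b'-1}\le K^{b-b'-1}=K^{0^-}$ on $\{|\Phi|>K\}$ (since $b-b'-1<0$) reduces the claim to the standard $X^{0,b'}$-output estimate from the basic smoothing frequency-restricted bound.
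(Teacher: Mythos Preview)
Your proposal is correct and follows essentially the same route as the paper: for both inequalities you establish the pointwise multiplier bound of Lemma~\ref{lem:pointwise} (your $s>-1/8$ computation for the second bound is slightly cruder than the paper's implicit $s>-1/6$ via $|\xi|m\lesssim |\xi_1|m_1$, but either suffices here), and then invoke Lemma~\ref{lem:monot} together with the basic smoothing estimate or \eqref{eq:multibourgPhi}.

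You go beyond the paper in one respect. The first estimate asks for $X^{0,b}$ output, whereas \eqref{eq:multibourgPhi}---which the paper cites without further comment---only yields $X^{0,b'}$ output. Your modulation-transfer argument correctly bridges this: in the regime $\jap{\tau-\xi^3}\lesssim\max_j\jap{\tau_j-\xi_j^3}$ the surplus weight $\jap{\tau-\xi^3}^{b-b'}$ is paid for by the slack in \eqref{eq:multibourgPhi} (one input is allowed at $b'$ rather than $b$), and in the regime $\jap{\tau-\xi^3}\lesssim|\Phi|$ it is absorbed by the factor $|\Phi|^{-1}\mathbbm{1}_{|\Phi|>K}$, producing exactly the claimed $K^{0^-}$. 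This is a standard and valid way to close the step the paper leaves implicit.
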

\begin{proof}
	Since, by Lemma \ref{lem:pointwise},
	$$
	\frac{\mathcal{M}_1}{m_{1234}\prod_{j=1}^4m_j}\lesssim \frac{\max_{j=1,\dots, 4} |\xi_j|\jap{\xi_j}^{(1/2)^-}}{N^{(1/2)^-}},
	$$
	the first estimate follows from the frequency-restricted estimate \eqref{eq:fre}, \eqref{eq:multibourgPhi2} and Lemma \ref{lem:monot}. For the second bound, observe (see the proof of Lemma \ref{lem:pointwise}) that 
	 $$
	 \frac{\xi m}{\prod_{j=1}^4m_j}\mathbbm{1}_{\{\max |\xi_j|>N\} } \lesssim \frac{\max_{j=1,\dots, 4} |\xi_j|\jap{\xi_j}^{(1/2)^-}}{N^{(1/2)^-}}.
	 $$
	 It now suffices to use \eqref{eq:fre} in Lemma \ref{lem:interpol}.
\end{proof}

\bigskip

\noindent\textbf{Control over $D_2$.} Here, we use a refined smoothing effect. Write
	$$
	V=\frac{\xi_{5678}\jap{\xi_{5678}}^{(1/2)^-}m_{5678}}{\prod_{j=5}^8m_j}\hat{v}_5\dots \hat{v}_8=\xi_{5678}\jap{\xi_{5678}}^{(1/2)^-}\hat{v}_5\dots \hat{v}_8
	$$
	Observe that, by the basic smoothing estimate,
	$$
	\|V\|_{X^{0,b'}} \lesssim \prod_{j=5}^8\|v_j\|_{X^{0,b}}.
	$$
	Then
\begin{align}
	\left|\int_0^t \int_{D_2} \frac{\mathcal{M}}{\prod_{j=1}^8 m_j} \hat{v}_1\dots \hat{v}_8 d\Xi ds\right|&= \left|\int_0^t \int_{D_2} \frac{\mathcal{M}_1}{\Phi_5\jap{\xi_{1234}}^{(1/2)^-}\prod_{j=1}^5 m_j} \hat{v}_1\dots \hat{v}_4V d\Xi ds\right| \\& \lesssim \left\| v_1  \right\|_{X^{0,b}}\left\| \left(\frac{{\mathcal{M}_1}(\xi_1,\dots,\xi_4,-\xi_{1234})}{\Phi_5\jap{\xi_{1234}}^{(1/2)^-}m_{1234}\prod_{j=1}^4m_j}\mathbbm{1}_{|\Phi_5|>K}\mathbbm{1}_B\right)^\vee v_2v_3v_4V\right\|_{X^{0,b'}}\label{eq:D2}
\end{align}
The last factor can be shown to be bounded:
\begin{lem}[Refined smoothing over $B$]\label{lem:refined}
	$$
	\left\|\left( \frac{{\mathcal{M}_1}(\xi_1,\dots,\xi_4,-\xi_{1234})}{\Phi_5\jap{\xi_{1234}}^{(1/2)^-}m_{1234}\prod_{j=1}^4m_j}\mathbbm{1}_{|\Phi_5|>K}\mathbbm{1}_B\right)^\vee v_1v_2v_3v_4\right\|_{X^{0,b'}}\lesssim N^{(-1)^+}K^{0^-} \left(\prod_{j=1}^3\|v_j\|_{X^{0,b}}\right)\|v_4\|_{X^{0,b'}}.
	$$ 
\end{lem}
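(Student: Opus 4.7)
My plan is to reduce Lemma \ref{lem:refined} to a pointwise multiplier bound combined with a frequency-restricted estimate, following the template of Lemmas \ref{lem:defiE2}, \ref{lem:L5} and \ref{lem:D1}. Setting
\[
\mathbf{M}(\xi_1,\dots,\xi_4) := \frac{\mathcal{M}_1(\xi_1,\dots,\xi_4,-\xi_{1234})}{\jap{\xi_{1234}}^{(1/2)^-}\,m_{1234}\prod_{j=1}^4 m_j}\,\mathbbm{1}_B,
\]
the multiplier of Lemma \ref{lem:refined} equals $(\mathbf{M}/\Phi_5)\,\mathbbm{1}_{|\Phi_5|>K}$; the $K^{0^-}$ factor will be produced automatically by estimate \eqref{eq:multibourgPhi} of Lemma \ref{lem:interpol}, with $v_4$ taking the role of the special first variable. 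The task therefore reduces to a (possibly refined) frequency-restricted estimate on a majorant of $\mathbf{M}$ that yields the $N^{(-1)^+}$ gain.

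Assume WLOG $|\xi_1|\ge\cdots\ge|\xi_4|$ and $|\xi_1|\gtrsim N$ (otherwise $\mathcal{M}_1\equiv 0$). I would split $B$ into two subregions. In \emph{Subregion 1}, $|\xi_{1234}|\gtrsim|\xi_1|\gtrsim N$, so $\jap{\xi_{1234}}^{(1/2)^-}\gtrsim N^{(1/2)^-}$; combined with the basic bound of Lemma \ref{lem:pointwise} this gives
\[
|\mathbf{M}|\lesssim \frac{\max_{j=1,\dots,4}|\xi_j|\jap{\xi_j}^{(1/2)^-}}{N^{(1)^-}},
\]
and Lemma \ref{lem:monot} together with \eqref{eq:fre} and \eqref{eq:multibourgPhi} closes the estimate. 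In \emph{Subregion 2}, $|\xi_{1234}|\ll|\xi_1|$; being in $B$ then forces $|\xi_3|\ll|\xi_1|$, and the convolution identity $\xi_1+\xi_2=\xi_{1234}-\xi_3-\xi_4$ implies $\xi_2\approx -\xi_1$, $|\xi_2|\simeq|\xi_1|$, with $\xi_1,\xi_2$ of opposite signs.

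On Subregion 2, focusing on the main case $|\xi_3|,|\xi_4|\ll N$ (so that $m_3=m_4=m_{1234}=1$; the complementary case $|\xi_3|\gtrsim N$ provides extra smallness through $m_3$), I would exploit the algebraic rewriting
\[
\mathcal{M}_1 = (m_1^2-1)(\xi_1+\xi_2) + (m_2^2-m_1^2)\,\xi_2.
\]
Combining $|\xi_1+\xi_2|\le|\xi_3|+|\xi_4|+|\xi_{1234}|$ with the mean value bound $|m_1^2-m_2^2|\lesssim m_1^2\,|\xi_1+\xi_2|/|\xi_1|$ (which uses $||\xi_1|-|\xi_2||\le|\xi_1+\xi_2|$, valid since $\xi_1,\xi_2$ have opposite signs) yields the refined pointwise estimate
\[
|\mathcal{M}_1|\lesssim |\xi_3|+|\xi_4|+|\xi_{1234}|.
\]

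The main obstacle is converting this refined pointwise bound into a multilinear estimate with $N^{(-1)^+}$ decay: the basic frequency-restricted estimate \eqref{eq:fre} is not enough, since $|\xi_3|\,m_1^{-2}$ need not be dominated by $|\xi_1|\jap{\xi_1}^{(1/2)^-}/N^{(1/2)^-}$. I would therefore derive a dedicated frequency-restricted estimate adapted to Subregion 2, exploiting the non-degeneracy
\[
\partial_{\xi_3}\Phi_5 = 3(\xi_3^2-\xi_1^2)\simeq -3\,\xi_1^2,
\]
which holds precisely because $|\xi_3|\ll|\xi_1|$ on Subregion 2. Integrating $\mathbbm{1}_{|\Phi_5-\alpha|<K}$ in $\xi_3$ then produces a factor $K/|\xi_1|^2$ instead of the basic $K/|\xi_1|$, and the extra $|\xi_1|^{-1}$ is exactly what is needed to absorb the weights $|\xi_3|+|\xi_4|+|\xi_{1234}|$ from the refined bound on $\mathcal{M}_1$. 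This non-degeneracy fails on $B^c$, where $|\xi_3|\simeq|\xi_1|$ makes $\partial_{\xi_3}\Phi_5$ small; this is precisely the geometric obstruction flagged in Section \ref{sec:descript}, justifying the removal of $B^c$ in the statement.
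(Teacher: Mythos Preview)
Your strategy is genuinely different from the paper's. The paper never exploits the mean-value cancellation in $\mathcal{M}_1$ nor splits according to the size of $|\xi_{1234}|$; instead it throws away the weight via $\jap{\xi_{1234}}^{-(1/2)^-}\le \jap{\xi_{\min}}^{-(1/2)^-}$, obtains the single pointwise majorant
\[
|\mathbf{M}|\lesssim \frac{|\xi_{\max}|\jap{\xi_{\max}}^{(1^-)^-}}{\jap{\xi_{\min}}^{(1/2)^-}\,N^{(1^-)^-}}
\]
(using $|\xi_{\max}|\gtrsim N$ to trade $\jap{\xi_{\max}}^{(1/2)^-}$ for $\jap{\xi_{\max}}^{(1^-)^-}N^{-(1/2)^-}$), and then proves the frequency-restricted estimate for the $N$-independent multiplier $|\xi_{\max}|\jap{\xi_{\max}}^{(1^-)^-}/\jap{\xi_{\min}}^{(1/2)^-}$ on $B$. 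That FR estimate is the whole proof: Case~A (all five frequencies comparable) needs Morse's lemma for the stationary points of $P=1-\sum p_j^3$, and Case~B (smallest~$\ll$~largest) splits into three subcases using the definition of $B$. Your Subregion~1 is a genuine simplification of the paper's Case~A: when $|\xi_{1234}|\gtrsim N$ the weight $\jap{\xi_{1234}}^{(1/2)^-}$ gives the extra $N^{-(1/2)^+}$ for free, so the basic FR estimate \eqref{eq:fre} suffices and no Morse lemma is needed.

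The gap is in Subregion~2. Your refined bound $|\mathcal{M}_1|\lesssim |\xi_3|+|\xi_4|+|\xi_{1234}|$ is correct, but by itself it carries no $N$-decay: dividing by $m_1m_2$ only yields $(|\xi_1|/N)^{-2s}$, and for $s$ near $-1/24$ that is roughly $N^{-1/12}$, far from $N^{-1}$. The entire $N^{(-1)^+}$ gain must therefore come from your ``dedicated'' FR estimate, and you only sketch one half of the interpolation (integration in $\xi_3$). For Lemma~\ref{lem:interpol} you need the companion estimate over the complementary variables, and in this region $\Phi_5$ is nearly stationary in $\xi_2$ (since $\xi_2\approx-\xi_1$) and in $\xi_{1234}$ (since it is small), so the other side is not automatic; this is essentially the same difficulty that forces the paper into its Case~B subcase analysis. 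Also, the claim that the complementary case $|\xi_3|\gtrsim N$ ``provides extra smallness through $m_3$'' is backwards: $m_3<1$ sits in the denominator of $\mathbf{M}$, so it \emph{enlarges} the multiplier. Your plan may be completable, but as written the hard step---the two-sided FR estimate yielding the full $N^{(-1)^+}$---is missing.
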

\begin{proof}
		
	Without loss of generality, we may assume that $|\xi|\ge |\xi_1|\ge \dots \ge |\xi_4|$.
	Since, by Lemma \ref{lem:pointwise},
	$$
	\frac{{\mathcal{M}_1}(\xi_1,\dots,\xi_4,-\xi_{1234})}{\jap{\xi_{1234}}^{(1/2)^-}m_{1234}\prod_{j=1}^4m_j} \lesssim \frac{|\xi|\jap{\xi}^{(1/2)^-}}{\jap{\xi_4}^{(1/2)^-}N^{(1/2)^-}}\lesssim \frac{|\xi|\jap{\xi}^{(1^-)^-}}{\jap{\xi_4}^{(1/2)^-}N^{(1^-)^-}}
	$$
	(with $2(1/2)^->(1^-)^-$), the proof will follow from \eqref{eq:multibourgPhi}, after we show the necessary frequency-restricted estimate for the multiplier
	$$
	M=\frac{|\xi|\jap{\xi}^{(1^-)^-}}{\jap{\xi_4}^{(1/2)^-}}
	$$
	We will follow closely the arguments of \cite[Proposition 6]{COS23}.
	\medskip
	
	\noindent\textbf{Case A.} $|\xi|\lesssim |\xi_4|$. We interpolate between
	$$
	\sup_{\xi,\xi_3} \int \frac{|\xi|\jap{\xi}^{(1^-)^-}}{\jap{\xi_4}^{1^-}}\mathbbm{1}_{|\Phi_5|\sim K} d\xi_1d\xi_2 =: \sup_{\xi,\xi_3} I_1
	$$
	and
	$$
	\sup_{\xi_1,\xi_2,\xi_4} \int |\xi|\jap{\xi}^{(1^-)^-}\mathbbm{1}_{|\Phi_5|\sim K} d\xi.=: \sup_{\xi_1,\xi_2,\xi_4} I_2
	$$
	We begin with $I_1$. In what follows, we take $p=1^+$.
%	 such that 
%	$$
%	2+(1^-)\frac{3}{p}\le 0
%	$$

	\noindent \textbf{a)} $|\partial_{\xi_1}\Phi_5|\gtrsim|\xi|^2$. This allows for the change of variables $\xi_1\mapsto \Phi_5$:
	\begin{align*}
		I_1 &\lesssim \left[ \int \left(\frac{|\xi|\jap{\xi}^{(1^-)^-}}{\jap{\xi_4}^{1^-}}\right)^p  \mathbbm{1}_{|\Phi_5|\sim K} d\xi_1d\xi_2\right]^{\frac{1}{p}}|\xi|^{2/p'}\\&\lesssim \left[ \int \left(|\xi|^{1^-}\right)^p  \mathbbm{1}_{|\Phi_5|\sim K} \frac{1}{|\xi|^2}d\Phi_5d\xi_2\right]^{\frac{1}{p}}|\xi|^{2/p'}  \lesssim |\xi|^{2/p'}\left[ |\xi|^{p^--1}K \right]^{1/p}\lesssim K^{1^-}.
	\end{align*}
	\noindent\textbf{b)} $|\partial_{\xi_1}\Phi_5|\ll |\xi|^2$, which means that $|\xi_1|\simeq |\xi_4|$. We renormalize the frequencies $p_j=\xi_j/\xi$ and write
	$$
	\Phi_5=\xi^3P, \quad P=1-\sum_{j=1}^4 p_j^3.
	$$
	A direct computation shows that $P$ has no degenerate stationary points. By Morse's lemma, this implies the existence of a local change of coordinates $(p_1,p_2)\mapsto (q_1,q_2)$ such that, for some $P_0$ independent of $q_1,q_2$,
	$$
	P=P_0 \pm q_1^2 \pm q_2^2.
	$$
	Then
	\begin{align*}
		I_1 &\lesssim \lesssim \left[ \int \left(\frac{|\xi|\jap{\xi}^{(1^-)^-}}{\jap{\xi_4}^{1^-}}\right)^p  \mathbbm{1}_{|\Phi_5|\sim K} d\xi_1d\xi_2\right]^{\frac{1}{p}}|\xi|^{2/p'}\\&\lesssim \left[ \int_{|q_1|,|q_2|\ll 1} |\xi|^{p^- + 2} \mathbbm{1}_{|\xi^3(P_0\pm q_1^2\pm q_2^2)|\sim K} |\xi|^2dq_1dq_2\right]^{\frac{1}{p}}|\xi|^{2/p'} \lesssim |\xi|^{2/p'}\left[ |\xi|^{p(1^-)^--1}K^{1^-} \right]^{1/p}\lesssim K^{1^-}.
	\end{align*}
	We now move to $I_2$. Observe that, if $|\partial_\xi \Phi_5|\ll |\xi_1|^2$, then $|\xi|\simeq \dots \simeq |\xi_3|\gg |\xi_4|$, contradicting the definition of $B$. Then $|\partial_\xi \Phi_5|\gtrsim |\xi_1|^2$ and the change of variables $\xi\mapsto \Phi_5$ yields
	\begin{align*}
		I_2\lesssim \left[\int |\xi_1|^{(1+(1^-)^-)p}\mathbbm{1}_{|\Phi_5|\sim K} \frac{d\Phi_5}{|\xi_1|^2} \right]^{1/p} |\xi_1|^{1/p'}\lesssim K^{1^-}.
	\end{align*}
	
	\medskip
	\noindent\textbf{Case B.} $|\xi_4|\ll |\xi|$. By the definition of $B$, $|\xi|\not\simeq |\xi_3|$.
	
	\noindent\textbf{a) }$|\xi_1|\not\simeq |\xi_2|$. Then we interpolate between
	$$
	\sup_{\xi,\xi_3} \int \frac{|\xi|^{1+(1^-)^-}}{\jap{\xi_4}^{1^-}}\mathbbm{1}_{|\Phi_5|\sim K}d\xi_1 d\xi_4 \lesssim |\xi|^{1/p'}\int \frac{1}{\jap{\xi_4}^{1^-}} \left[\int |\xi|^{p(1+(1^{-})^-)} \mathbbm{1}_{|\Phi_5|\sim K} \frac{d\Phi_5}{|\xi|^2}\right]^{1/p}d\xi_4 \lesssim K^{1^-}.
	$$
	and
	$$
	\sup_{\xi_1,\xi_2,\xi_4} \int |\xi|^{1+(1^-)^-}\mathbbm{1}_{|\Phi_5|\sim K}d\xi \lesssim |\xi_1|^{1/p'} \left[  \int  |\xi|^{p(1+(1^-)^-)}\mathbbm{1}_{|\Phi_5|\sim K} \frac{d\Phi_5}{|\xi_1|^2}\right]^{1/p} \lesssim K^{1^-}.
	$$
	\noindent\textbf{b) }$|\xi_1|\simeq |\xi_2|$ and $|\xi_1|\not\simeq |\xi_3|$. This implies that $|\xi|\not\simeq |\xi_2|$. We then proceed as in subcase a), exchanging $\xi$ with $\xi_1$.
	
	\noindent\textbf{c) }$|\xi_1|\simeq |\xi_2|\simeq |\xi_3|$. This implies that $|\xi_1|\simeq |\xi|/3$ and that $|\Phi_5|\sim |\xi|^3$. We interpolate between
	$$
	\sup_{\xi,\xi_3} \int \frac{|\xi|^{(1+(1^-)^-)}}{\jap{\xi_4}^{1^-}} \mathbbm{1}_{|\xi^3|\sim K} d\xi_1d\xi_4 \lesssim K^{1^-}\ln \jap{K}.
	$$
	and
	$$
	\sup_{\xi_1,\xi_2,\xi_4} \int |\xi|^{1+(1^-)^-}\mathbbm{1}_{|\xi|^3\sim K}d\xi\lesssim K^{1^-}.
	$$

\end{proof}
%By Lemma \ref{lem:pointwise} and \ref{lem:refined},
%\begin{align*}
%	\left\| \frac{{\mathcal{M}_1}(\xi_1,\dots,\xi_4,-\xi_{1234})}{\Phi_5\jap{\xi_{1234}}^{(1/2)^-}\prod_{j=1}^4m_j}\mathbbm{1}_{|\Phi_5|>K}\mathbbm{1}_Av_2v_3v_4V\right\|_{X^{0,b'}} &\lesssim N^{(-1)^+}K^{0^-}\left(\prod_{j=2}^4 \|v_j\|_{X^{0,b}}\right)\|V\|_{X^{0,b}}\\&\lesssim  N^{(-1)^+}K^{0^-}\prod_{j=2}^8 \|v_j\|_{X^{0,b}}
%\end{align*}
%and the estimate follows.

\bigskip

\noindent\textbf{Control over $D_3$.} This is the most nontrivial region, as it cannot be reduced to two quintic estimates (as we have done in the previous regions). For this region, we need to prove a full 8-linear estimate:

\begin{lem}\label{lem:D3}
	$$
	\left\| \left(\frac{\mathcal{M}}{m\prod_{j=1}^7m_j}\mathbbm{1}_{D_3}\right)^\vee u_1\dots u_7 \right\|_{X^{0,b'}} \lesssim N^{(-1)^+}K^{0^-}\prod_{j=1}^7\|u_j\|_{X^{0,b}}.
	$$
\end{lem}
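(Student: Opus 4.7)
I would begin with a dyadic decomposition $\mathbbm{1}_{|\Phi_5|>K} = \sum_{K'\ge K \text{ dyadic}}\mathbbm{1}_{|\Phi_5|\sim K'}$ and use $|\Phi_5|^{-1}\lesssim (K')^{-1}$ on each shell. It then suffices to establish, uniformly in dyadic $K'\ge K$,
$$
\left\|\left(\frac{|\mathcal{M}_1||\xi_{1234}|}{\prod_{j=1}^8 m_j}\mathbbm{1}_{D_3}\mathbbm{1}_{|\Phi_5|\sim K'}\right)^\vee u_1\cdots u_7\right\|_{X^{0,b'}} \lesssim N^{(-1)^+}(K')^{1^-}\prod_{j=1}^7\|u_j\|_{X^{0,b}},
$$
since then $\sum_{K'\ge K}(K')^{-1+0^+}\lesssim K^{0^-}$ produces the claimed $K^{0^-}$ factor. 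By Lemma \ref{lem:interpol} (estimate \eqref{eq:multibourg}), this $X^{0,b'}$ bound reduces to an appropriate frequency-restricted estimate on the multiplier $M:=|\mathcal{M}_1\xi_{1234}|\mathbbm{1}_{D_3}\mathbbm{1}_{|\Phi_5|\sim K'}/\prod_{j=1}^8 m_j$ relative to the 7-linear Bourgain resonance $\Phi=\xi_0^3-\sum_{j=1}^7\xi_j^3$.

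The frequency-restricted estimate will combine two independent sources of decay, exploiting both sides of the tree \eqref{eq:tree2}. On $D_3$ Lemma \ref{lem:pointwise} gives $|\mathcal{M}_1|/\prod_{j=1}^8 m_j\lesssim R^{(3/2)^-}/N^{(1/2)^-}$, where $R:=|\xi_1|\simeq\cdots\simeq|\xi_4|\gtrsim N$, leaving a deficit of $R^{-(3/2)^+}N^{-(1/2)^+}|\xi_{5678}|^{-1}(K')^{1^-}$ to be supplied by the frequency-restricted integral. Choosing $A=\{1,5,6,7\}$, I would integrate first over the low frequencies $\xi_5,\xi_6,\xi_7$ with $\xi_{5678}$ fixed: since $\partial_{\xi_j}\Phi=-3\xi_j^2+3\xi_8^2$ has size at most $|\xi_{5678}|^2$ there, the 4-linear frequency-restricted bound of \cite[Proposition 6]{COS23} produces a factor $|\xi_{5678}|\jap{\xi_{5678}}^{(1/2)^-}{K''}^{1^-}$ for the sublevel $\{|\Phi-\alpha|<K''\}$. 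Integrating next over the remaining high-frequency variable $\xi_1$ (with $\xi_2,\xi_3,\xi_4$ pinned by $\xi_{1234}=-\xi_{5678}$), the indicator $\mathbbm{1}_{|\Phi_5|\sim K'}$ combined with $|\partial_{\xi_1}\Phi_5|\gtrsim R^2$ --- or, in the degenerate sub-case $|\xi_1|\simeq|\xi_4|$, Morse's lemma on the non-degenerate Hessian of $P=1-\sum_{j=1}^4 p_j^3$ as in Case A.b of Lemma \ref{lem:refined} --- yields an additional $(K')^{1^-}/R^2$. The two gains combine to close the deficit.

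The main obstacle is the coupling between the two resonance functions $\Phi_5$ and $\Phi$: these are distinct polynomials whose sublevel sets cannot be simultaneously coordinate-aligned by a single change of variables. In particular, the degenerate Morse sub-region of $\Phi_5$ overlaps with regimes where $\Phi$ itself has small low-frequency gradient, and there one must appeal to $|\partial_{\xi_{5678}}\Phi_5|\simeq R^2$ in order to trade one low-frequency integration for a $\Phi_5$-integration. This is precisely the ``extra room'' afforded by the 4-linear subtree in \eqref{eq:tree2} that was highlighted in the heuristic preceding the lemma. A careful case analysis paralleling Lemma \ref{lem:refined} (Cases A.a, A.b, and B, together with a further subcase distinguishing the relative sizes among $|\xi_5|,\ldots,|\xi_8|$) is expected to cover all sub-regions of $D_3$ and close the proof.
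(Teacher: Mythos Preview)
Your reduction --- dyadic decomposition in $|\Phi_5|$, then Lemma~\ref{lem:interpol} to reduce the $X^{0,b'}$ bound to a frequency-restricted estimate involving \emph{both} the indicator $\mathbbm{1}_{|\Phi_5|\sim K'}$ and the sublevel $\{|\Phi_8-\alpha|<K''\}$ --- matches the paper exactly. The divergence is in how the double constraint is exploited, and here your plan contains a genuine gap.

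You assert that $\Phi_5$ and $\Phi=\Phi_8$ ``cannot be simultaneously coordinate-aligned by a single change of variables'' and therefore propose a sequential scheme (integrate the low subtree against $\Phi_8$, then one high variable against $\Phi_5$). This premise is wrong: the paper's core device is precisely a \emph{joint} change of variables $(\xi_i,\xi_j)\mapsto(\Phi_5,\Phi_8)$ whose Jacobian is explicitly computed and bounded below. For instance, in the paper's Case~A one has
\[
\left|\frac{\partial(\Phi_5,\Phi_8)}{\partial(\xi_1,\xi_3)}\right|=\left|\det\begin{pmatrix}\xi_1^2-\xi_{4567}^2 & \xi_3^2-\xi_{4567}^2\\ \xi_1^2-\xi_7^2 & \xi_3^2-\xi_7^2\end{pmatrix}\right|\simeq |\xi|^2\,|\xi_7^2-\xi_{4567}^2|\gtrsim |\xi|^2|\xi_4|^2,
\]
which immediately gives $\int\mathbbm{1}_{|\Phi_5|\sim K'}\mathbbm{1}_{|\Phi_8-\alpha|<K''}d\xi_1d\xi_3\lesssim K'K''/(|\xi|^2|\xi_4|^2)$, with the remaining low variable integrated trivially over a range $\lesssim|\xi_4|$. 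The complementary side of the interpolation uses the same idea with $(\xi,\xi_2)$. In Case~B, where this Jacobian degenerates, the paper combines Morse's lemma on the high side with a different Jacobian $\partial(\Phi_5,\Phi_8)/\partial(\xi,\xi_7)$ on the other interpolation side. The case split is governed by the \emph{low-frequency} configuration (whether $|\xi_4^2-\xi_{4567}^2|$ and $|\xi_7^2-\xi_{4567}^2|$ are $\gtrsim|\xi_4|^2$), not by the high-frequency cases of Lemma~\ref{lem:refined} as you anticipate.

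Your sequential scheme is not unsalvageable --- via Fubini and $\Phi_8=\Phi_5+\Psi$ with $\Psi=\xi_{4567}^3-\sum_{j=4}^7\xi_j^3$, the two constraints do decouple once $\xi_{4567}$ is frozen --- but as written it has holes: (i) you invoke the 4-linear FRE of \cite{COS23} as a black box for the inner $(\xi_5,\xi_6)$-integration, yet that FRE comes with its \emph{own} choice of $A$ which need not be compatible with yours; (ii) you treat only the $M_1$ side and never address $M_2$ (integration over $\{\xi_0,\xi_2,\xi_3,\xi_4\}$), where three variables are high and $\Phi_5$ is stationary in all of them; (iii) the pointwise bound you take from Lemma~\ref{lem:pointwise} is looser on $D_3$ than the paper's direct bound $|\mathcal{M}_1\xi_{4567}|/\prod m_j\lesssim |\xi|^{1-4s}|\xi_4|/N^{-4s}$, which makes the FRE correspondingly harder to close. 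The joint-Jacobian computation is the missing idea that resolves all three at once.
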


\begin{proof}
	First, we recall that, over $D_3$,
	$$
	|\xi|\simeq |\xi_1| \simeq |\xi_2| \simeq |\xi_3| \gtrsim N\gg |\xi_4|>\dots>|\xi_7|, 
	$$
	so that
	$$
	\left|\frac{m^2\xi + m_1^2\xi_1 + \dots +m_3^2\xi_3 +m_{4567}^2\xi_{4567}}{m\prod_{j=1}^7m_j}\xi_{4567}\right| \lesssim \frac{|\xi_2||\xi_4|}{N^{-4s}|\xi|^{4s}} \lesssim \frac{|\xi|^{1-4s}|\xi_4|}{N^{-4s}}.
	$$
	Then the claimed estimate will follow from \eqref{eq:multibourg}, once we prove the frequency-restricted estimates associated to the multiplier
	$$
	M=\frac{|\xi|^{1-4s}|\xi_4|}{N^{-4s}|\Phi_5|}\mathbbm{1}_{|\Phi_5|>K}.
	$$
	By decomposing dyadically in $|\Phi_5|$, we can further reduce our problem to the multiplier
		$$
	M=\frac{|\xi|^{1-4s}|\xi_4|}{N^{-4s}}\mathbbm{1}_{|\Phi_5|\sim K}.
	$$
	
	For the sake of clarity, we show how to obtain the estimates with a full power of $K$ and $K'$. The exponent can be reduced to $1^-$ through the application of Hölder's inequality (as it was done in the proof of Lemma \ref{lem:refined}).

	\noindent\textbf{Case A.} $|\xi_4^2-\xi_{4567}^2|\gtrsim |\xi_4|^2$ and $|\xi_7^2-\xi_{4567}^2|\gtrsim |\xi_4|^2$. We interpolate between
	$$
	\sup_{\xi,\xi_2,\xi_4,\xi_6}\int \frac{|\xi|^{1-4s}|\xi_4|}{N^{-4s}} \mathbbm{1}_{|\Phi_5|\sim K}\mathbbm{1}_{|\Phi_8-\alpha|<K'}d\xi_1 d\xi_3 d\xi_5
	$$
	and
		$$
	\sup_{\xi_1,\xi_3,\xi_5,\xi_7}\int \frac{|\xi|^{1-4s}|\xi_4|}{N^{-4s}} \mathbbm{1}_{|\Phi_5|\sim K}\mathbbm{1}_{|\Phi_8-\alpha|<K'}d\xi d\xi_2 d\xi_6.
	$$
	For the first,
	$$
	\left|\frac{\partial(\Phi_5,\Phi_8)}{\partial(\xi_1,\xi_3)}\right| = \left|\begin{matrix}
		\xi_1^2-\xi_{4567}^2 & \xi_3^2 - \xi_{4567}^2\\
		\xi_1^2-\xi_7^2 & \xi_3^2-\xi_7^2
	\end{matrix}\right| \simeq |\xi_1|^2|\xi_7^2-\xi_{4567}^2|\gtrsim |\xi|^2|\xi_4|^2
	$$
	and thus
	\begin{align*}
		& 	\sup_{\xi,\xi_2,\xi_4,\xi_6}\int\frac{|\xi|^{1-4s}|\xi_4|}{N^{-4s}}\mathbbm{1}_{|\Phi_5|\sim K}\mathbbm{1}_{|\Phi_8-\alpha|<K'}d\xi_1 d\xi_3 d\xi_5 \\ \lesssim & \sup_{\xi,\xi_2,\xi_4,\xi_6} \int \frac{|\xi|^{1-4s}|\xi_4|}{N^{-4s}} \mathbbm{1}_{|\Phi_5|\sim K}\mathbbm{1}_{|\Phi_8-\alpha|<K'} \frac{1}{|\xi|^2|\xi_4|^2}d\Phi_5 d\Phi_8 d\xi_5\\\lesssim &\sup_{\xi,\xi_2,\xi_4,\xi_6}  \frac{|\xi|^{-1-4s}}{N^{-4s}|\xi_4|} \int d\xi_5 \cdot KK' \lesssim \frac{KK'}{N}.
	\end{align*}
For the second, a similar computation holds:
	$$
\left|\frac{\partial(\Phi_5,\Phi_8)}{\partial(\xi,\xi_2)}\right| = \left|\begin{matrix}
	\xi^2-\xi_{4567}^2 & \xi_2^2 - \xi_{4567}^2\\
	\xi^2-\xi_4^2 & \xi_2^2-\xi_4^2
\end{matrix}\right| \simeq |\xi|^2|\xi_4^2-\xi_{4567}^2|\gtrsim |\xi_1|^2|\xi_4|^2,
$$
from which
\begin{align*}
	& 	\sup_{\xi_1,\xi_3,\xi_5,\xi_7}\int \frac{|\xi|^{1-4s}|\xi_4|}{N^{-4s}} \mathbbm{1}_{|\Phi_5|\sim K}\mathbbm{1}_{|\Phi_8-\alpha|<K'}d\xi d\xi_2 d\xi_6 \\ \lesssim & \sup_{\xi_1,\xi_3,\xi_5,\xi_7} \int \frac{|\xi_1|^{1-4s}|\xi_4|}{N^{-4s}} \mathbbm{1}_{|\Phi_5|\sim K}\mathbbm{1}_{|\Phi_8-\alpha|<K'} \frac{1}{|\xi_1|^2|\xi_4|^2}d\Phi_5 d\Phi_8 d\xi_6\\\lesssim & \sup_{\xi_1,\xi_3,\xi_5,\xi_7} \frac{|\xi_1|^{-1-4s}}{N^{-4s}} \int \frac{1}{|\xi_5|}d\xi_6 \cdot KK' \lesssim \frac{KK'}{N^{1^-}}.
\end{align*}
\textbf{Case B.} $|\xi_4^2-\xi_{4567}^2|\ll |\xi_4|^2$ or $|\xi_7^2-\xi_{4567}^2|\ll |\xi_4|^2$. We interpolate between
	$$
\sup_{\xi,\xi_4,\xi_6,\xi_7}\int\frac{|\xi|^{1-4s}|\xi_4|}{N^{-4s}} \mathbbm{1}_{|\Phi_5|\sim K}\mathbbm{1}_{|\Phi_8-\alpha|<K'}d\xi_1 d\xi_2 d\xi_5
$$
and
$$
\sup_{\xi_1,\xi_2,\xi_3,\xi_5}\int \frac{|\xi|^{1-4s}|\xi_4|}{N^{-4s}} \mathbbm{1}_{|\Phi_5|\sim K}\mathbbm{1}_{|\Phi_8-\alpha|<K'}d\xi d\xi_6 d\xi_7.
$$
For the first, $\Phi_5$ is stationary in $\xi_1,\xi_2$ ($|\nabla_{\xi_1,\xi_3}\Phi_5|\ll |\xi|^2$) and does not depend on $\xi_5$. Moreover, $\Phi_8$ is not stationary in $\xi_5$. We normalize
$$
\xi^3-\xi_1^3-\dots-\xi_3^3=\xi^3P(p_1,p_2)=\xi^3(1-p_1^3-p_2^3-p_3^3),\quad p_j=\frac{\xi_j}{\xi}.
$$
Then $(p_1,p_2)$ is near one of the three possible stationary points
$$
(1,1)\quad (1,-1),\quad \mbox{and}\quad (-1,1).
$$
A direct verification shows that $\det(D^2P)\neq 0$ at each of these points. Applying Morse's lemma, there exists a local change of variables $(p_1,p_2)\mapsto(q_1,q_2)$ such that
$$
P(p_1,p_2)=P_0\pm q_1^2\pm q_2^2.
$$
This allows for the following computation:

\begin{align*}
	& \sup_{\xi,\xi_4,\xi_6,\xi_7}\int\frac{|\xi|^{1-4s}|\xi_4|}{N^{-4s}} \mathbbm{1}_{|\Phi_5|\sim K}\mathbbm{1}_{|\Phi_8-\alpha|<K'}d\xi_1 d\xi_2 d\xi_5\\ \lesssim \ & 	
	\sup_{\xi,\xi_4,\xi_6,\xi_7}\int\frac{|\xi|^{1-4s}|\xi_4|}{N^{-4s}|\xi|^2}\mathbbm{1}_{|\xi^3P|\sim K}\mathbbm{1}_{|\Phi_8-\alpha|<K'}|\xi|^2dp_1 dp_2 d\Phi_8\\ \lesssim \ & 	
	\sup_{\xi,\xi_4,\xi_6,\xi_7}\int\frac{|\xi|^{2-4s}}{N^{-4s}}\mathbbm{1}_{|\xi^3(P_0\pm q_1^2\pm q_2^2)|\sim K}\mathbbm{1}_{|\Phi_8-\alpha|<K'}dq_1 dq_2 d\Phi_8 \\ \lesssim \ & 	
	\sup_{\xi,\xi_4,\xi_6,\xi_7}\frac{|\xi|^{-1-4s+0^+}}{N^{-4s}}K^{1^-}K'\lesssim \frac{K^{1^-}K'}{N^{1^-}}
\end{align*}
For the other side of the interpolation, we compute
$$
\left|\frac{\partial(\Phi_5,\Phi_8)}{\partial(\xi,\xi_7)} \right|=\left|\begin{matrix}
	\xi^2-\xi_{4567}^2 & 0\\
	\xi^2-\xi_7^7 & \xi_4^2-\xi_7^2
\end{matrix}\right| = |\xi^2||\xi_4^2-\xi_7^2|.
$$
If $|\xi_4^2-\xi_7^2|\ll |\xi_4|^2$, then $|\xi_4|\simeq |\xi_7|$. Since the frequencies are ordered and we're in Case B, this would imply
$$
|\xi_4|\simeq|\xi_5|\simeq|\xi_6|\simeq|\xi_7|\simeq|\xi_{4567}|,
$$
which is impossible, as these frequencies add up to zero. We conclude that $|\xi_4^2-\xi_7^2|\gtrsim |\xi_4|^2$ and then
\begin{align*}
	&\sup_{\xi_1,\xi_2,\xi_3,\xi_5}\int \frac{|\xi|^{1-4s}|\xi_4|}{N^{-4s}} \mathbbm{1}_{|\Phi_5|\sim K}\mathbbm{1}_{|\Phi_8-\alpha|<K'}d\xi d\xi_6 d\xi_7\\ \lesssim & \sup_{\xi_1,\xi_2,\xi_3,\xi_5}\int \frac{|\xi|^{1-4s}|\xi_4|}{N^{-4s}}\mathbbm{1}_{|\Phi_5|\sim K}\mathbbm{1}_{|\Phi_8-\alpha|<K'}\frac{1}{|\xi|^2|\xi_4|^2}d\Phi_5 d\Phi_8d\xi_6.\\ \lesssim & \sup_{\xi_1,\xi_2,\xi_3,\xi_5}\int \frac{|\xi|^{-1-4s}}{N^{-4s}|\xi_5|}\mathbbm{1}_{|\Phi_5|\sim K}\mathbbm{1}_{|\Phi_8-\alpha|<K'}d\Phi_5 d\Phi_8d\xi_6.\\ \lesssim & \sup_{\xi_1,\xi_2,\xi_3,\xi_5} \frac{KK'|\xi|^{-1-4s}}{N^{-4s}}\lesssim \frac{KK'}{N}.
\end{align*}

\end{proof}

\begin{proof}[Proof of Lemma \ref{lem:L8}]
Decomposing the frequency domain into $D_1, D_2$ and $D_3$. For the $D_1$, we apply Lemma \ref{lem:D1} in \eqref{eq:D1}. The estimate over $D_2$ follows from \eqref{eq:D2} and Lemma \ref{lem:refined}. Finally, by Lemma \ref{lem:D3},
\begin{align*}
	&\left| \int_0^t \Lambda_8\left(\frac{m_1^2\xi_1+\dots+m_4^2\xi_4-m_{1234}^2\xi_{1234}}{\Phi_5}\xi_{5678}\mathbbm{1}_{|\Phi_5|>K}\mathbbm{1}_{D_3};u \right) ds \right|\\\lesssim \ &	t^{0^+}\left\| \left(\frac{\mathcal{M}}{m\prod_{j=1}^7m_j}\mathbbm{1}_{D_3}\right)^\vee Iu_1\dots Iu_7 \right\|_{X^{0,b'}}\|Iu\|_{X^{0,b}}\lesssim t^{0^+}N^{(-1)^+}K^{0^-}\|Iu\|_{X^{0,(1/2)^+}}^8.
\end{align*}

\end{proof}

\section{Proof of Theorem \ref{thm:main}}\label{sec:thm}

\begin{proof}[Proof of Theorem \ref{thm:main}]
	
	\textit{Step 1. Local existence for the modified equation.} For $N$ fixed, we rewrite \eqref{4kdv} in the equivalent form
	\begin{equation}\label{eq:equI}
		\partial_t Iu + \partial_x^3 Iu = I\partial_x (u^4),
	\end{equation}
	with initial data $Iu_0\in L^2(\R)$. As in the proof of \cite[Lemma 1]{GPS}, given initial data satisfying $\|Iu_0\|_{L^2}<2\epsilon_0$, the solution to \eqref{eq:equI} is defined over the time interval $[0,1]$ and its $X^{s,b}$ norm, restricted to the time interval $[0,1]$, satisfies $$\|Iu\|_{X_{[0,1]}^{0,(1/2)^+}}<4\epsilon_0.$$ Moreover, $\epsilon_0$ is independent\footnote{These facts required only $s>-1/6$ and we can reuse them here.} of $N$. Without loss of generality, we can take $\epsilon_0$ so that
	$$
	(4\epsilon_0)^3<\frac{1}{100},
	$$
	so that, by Lemma \ref{lem:defiE2}, $E_2(u)\simeq \|Iu\|_{L^2}$ for $\|Iu\|_{L^2}<2\epsilon_0$.
	
	\textit{Step 2. Iteration.}  For a fixed time $T\gg1$, let us consider $\rho\gg 1, \lambda$ and $N$ so that
	$$
	\rho^{-\frac{1}{6}-s}\|u_0\|_{H^s}<\epsilon_0,\quad \lambda=\rho N^{\frac{-6s}{1+6s}},\quad  N^{\frac{1^-+24s}{1+6s}}=\rho^3T.
	$$ 
	Since $s>-1/24$, we have $N \gg 1$. Then $N^{1^-}=\lambda^3T$ and $u_0^\lambda(x)=\lambda^{-2/3}u_0(x/\lambda)$ satisfies
	$$
	\|Iu_0^\lambda\|_{L^2}<\epsilon_0.
	$$
	Then the successive aplication of Step 1 gives a solution $u^\lambda$ defined over $[0,n]$, $n\in\mathbb{N}$, for as long as
	\begin{equation}\label{eq:coniter}
		\|Iu^\lambda(n)\|_{L^2}<2\epsilon_0.
	\end{equation}
	By the almost-conservation of the second modified energy \eqref{eq:almosconserved},
	\begin{align*}
		E_2(u^\lambda(n)) &\le E_2(u^\lambda(n-1)) + N^{(-1)^+}((4\epsilon_0)^5 + (4\epsilon_0)^8) \\&\le E_2(u^\lambda_0) + nN^{(-1)^+}((4\epsilon_0)^5 + (4\epsilon_0)^8)\\&\le E_2(u^\lambda_0) + nN^{(-1)^+}
	\end{align*}
	and thus condition \eqref{eq:coniter} is satisfied for $n<N^{1^-}$. By the definition of $N$, this means that $u^\lambda$ is defined over the time interval $[0,\lambda^3T]$. By rescaling, the solution $u$ to \eqref{4kdv} with initial data $u_0$ is defined over $[0,T]$. As $T$ is arbitrary, the solution is global-in-time. The growth bound follows as in \cite[Theorem 1]{GPS}.
\end{proof}
\bibliography{biblio}
\bibliographystyle{plain}

\end{document}